\newtheorem{theorem}{Theorem}
\newtheorem{prop}{Proposition}[section]
\theoremstyle{definition}
\newtheorem{remark}{Remark}[section]
\newcommand{\numbereqn}{\addtocounter{equation}{1}\tag{\theequation}} 
\newcommand{\mub}{\bm{\mu}}
\newcommand{\wb}{\bm{w}}
\newcommand{\Yb}{\bm{Y}}
\newcommand{\betab}{\bm{\beta}}
\newcommand{\thetab}{\bm{\theta}}
\newcommand{\xb}{\bm{x}}
\newcommand{\yb}{\bm{y}}
\newcommand{\zb}{\bm{z}}
\newcommand{\bt}{\bm{t}}
\newcommand{\vb}{\bm{v}}
\newcommand{\ab}{\bm{a}}
\newcommand{\at}{\widetilde{\bm{a}}}
\newcommand{\bbo}{\bm{b}}
\newcommand{\I}{\int_{\mathbb{R}^p} \int_{\mathbb{R}^n_-}}
\newcommand{\Wt}{\widetilde{W}}
\newcommand{\Xt}{\widetilde{X}}
\newcommand{\wt}{\widetilde{\bm{w}}}
\newcommand{\vt}{\widetilde{\bm{v}}}
\newcommand{\Ut}{\widetilde{U}}
\newcommand{\Kt}{\widetilde{K}}
\newcommand{\kt}{\widetilde{k}}
\newcommand{\sigmat}{\widetilde{\sigma}}
\newcommand{\betat}{\widetilde{\bm{\beta}}}
\newcommand{\Psit}{\widetilde{\Psi}}
\newcommand{\one}{\mathbbm{1}}
\newcommand{\diag}{\text{diag}}
\newcommand{\R}{\mathbb{R}}
\newcommand{\proj}{\mathcal{P}}
\newcommand{\TN}{\text{TN}}
\newcommand{\N}{\text{N}}
\newcommand{\var}{\text{var}}
\newcommand{\lambdamax}{\lambda_{\max}}
\newcommand{\lambdamin}{\lambda_{\min}}
\newcommand{\Z}{\mathcal{Z}}
\newcommand{\G}{\mathcal{G}}
\newcommand{\hlinehere}{\noindent\rule[0.5ex]{\linewidth}{1pt}}
\begin{document}


%
%
%
%
%
\hypersetup{linkcolor=blue}

\date{January, 2017}

\author{Saptarshi Chakraborty \\
Kshitij Khare  \thanks{Kshitij Khare  (email: kdkhare@stat.ufl.edu) is Associate Professor, Department of Statistics, University of Florida. Saptarshi Chakraborty (email: c7rishi@ufl.edu) is Graduate Student, Department of Statistics, University of Florida.} \\ \\
University of Florida}

\title{Convergence properties of Gibbs samplers for Bayesian probit regression with 
proper priors \thanks{Keywords and phrases:
{Bayesian probit model},
{binary regression},
{geometric ergodicity},
{proper normal prior},
{trace class},
{sandwich algorithms},
{Data Augmentation},
{Markov chain Monte Carlo}
}
}

\maketitle

\begin{abstract}
The Bayesian probit regression model (Albert and Chib \cite{Albert:Chib:1993}) is popular and widely used for binary regression. While the improper flat prior for the regression coefficients is an appropriate choice in the absence of any prior information, a proper normal prior is desirable when  prior information is available or in modern high dimensional settings where the number of coefficients ($p$) is greater than the sample size ($n$). For both choices of priors, the resulting posterior density is intractable and a Data Augmentation (DA) Markov chain  is used to generate approximate samples from the posterior distribution. Establishing geometric ergodicity for this DA Markov chain is important as it provides theoretical guarantees for constructing standard errors for Markov chain based estimates of posterior quantities. In this paper, we first show that in case of proper normal priors, the DA Markov chain is geometrically ergodic \textit{for all} choices of the design matrix $X$, $n$ and $p$ (unlike the improper prior case, where $n \geq p$ and  another condition on $X$ are required for posterior propriety itself). We also derive sufficient conditions under which the DA Markov chain is trace-class, i.e., the eigenvalues of the corresponding operator are summable. In particular,  this allows us to conclude that the Haar PX-DA sandwich algorithm (obtained by inserting an inexpensive extra step in between the two steps of the DA algorithm) is strictly better than the DA algorithm in an appropriate sense.  
\end{abstract}

%
%
%
%

\section{Introduction} \label{intro}

\noindent
Let $Y_1,\cdots,Y_n$ be independent Bernoulli random variables with $P(Y_i = 1|\betab) = \Phi(\xb_i^T \betab)$ where $\xb_i \in \R^p$ is the vector of known covariates corresponding to the $i$th observation $Y_i$, for $i=1,\cdots,n$; $\betab \in \R^p$ is a vector of unknown regression coefficients and $\Phi(\cdot)$ denotes the standard normal distribution function. For $y_i \in \{0,1\};\:i=1,\cdots,n$, the likelihood is given by:
\[
P(Y_1=y_1,\cdots,Y_n=y_n)= \prod_{i=1}^n \left[\Phi(\xb_i^T\betab)\right]^{y_i} \left[1- \Phi(\xb_i^T\betab)\right]^{1- y_i}.
\]

\noindent
Our objective is to make inferences about $\betab$, and we intend to adopt a Bayesian approach as proposed in Albert and Chib \cite{Albert:Chib:1993}. In particular, we specify the prior  density $\pi(\betab)$ to be a $\N_p\left( Q^{-1}\vb, Q^{-1} \right)$ density, with a positive definite matrix $Q$ and $\vb \in \R^p$. Note that any vector $\mub \in \R^p$  can be written as $\mub = Q^{-1} Q \mub = Q^{-1}\vb$ with $\vb =  Q \mub$. Hence the assumption that the prior mean is of the form $Q^{-1}\vb$ is not restrictive. Albert and Chib \cite{Albert:Chib:1993} consider an  improper flat prior for $\betab$, which can be obtained as a limiting case of this (proper) prior, e.g., by taking $\vb = \bm 0$, and $Q$ approaching the matrix of all zeros.  Let $\yb=(y_1,\cdots,y_n)^T$ denote the observed values of the random sample $\Yb = (Y_1,\cdots,Y_n)^T$, and 
\[
m(\yb) = \int_{\R^p} \pi(\betab) \left(\prod_{i=1}^n \left[\Phi(\xb_i^T\betab)\right]^{y_i} \left[1- \Phi(\xb_i^T\betab)\right]^{1- y_i} \right) \: d\betab
\]
denote the marginal distribution of $\yb$. Then the posterior density of $\betab$ given $\Yb=\yb$ is given by
\[
\pi(\betab|\yb) = \frac{1}{m(\yb)} \pi(\betab) \left(\prod_{i=1}^n \left[\Phi(\xb_i^T\betab)\right]^{y_i} \left[1- \Phi(\xb_i^T\betab)\right]^{1- y_i} \right).
\]

\noindent
Note that the posterior density $\pi(\betab \mid {\bf y})$ does not have a closed form. It is highly intractable in the sense that computing expectations with respect to this density is not feasible in closed form, or by using numerical methods (for even moderate $p$), or by using Monte Carlo methods (for large $p$). Albert and Chib (1993) proposed a data augmentation MCMC algorithm (henceforth called the AC-DA algorithm) for this problem. As shown below, each iteration of this algorithm involves sampling from $(n+p)$ standard univariate densities. Consider the latent variables $z_1, \cdots, z_n$ where 
\[
z_i | \betab \sim N(\xb_i^T \betab, 1), \quad \text{with } y_i = \one(z_i > 0) \qquad \text{for } i =1,\cdots,n.
\] 

\noindent Further, let $X$ denote the $n \times p$ design matrix. Simple calculations show that the joint conditional density $\pi(\betab,\zb|\yb)$ of $\betab \text{ and } \zb = (z_1,\cdots,z_n)^T$ given the data $\yb$ satisfies the following 
\begin{align*} \label{pi_betaz|y}
\pi(\betab,\zb|\yb) & \propto \exp\left[ -\frac{1}{2} \left\lbrace \left(\betab - Q^{-1}\vb \right)^T Q \left(\betab - Q^{-1}\vb \right\rbrace \right) \right] \\
&\qquad \qquad \times \exp \left[ -\frac{1}{2} (\zb - X\betab)^T (\zb - X\betab) \right]\\
&\qquad \qquad \times  \prod_{i=1}^n \left \lbrace \left(\one_{(0,\infty)}(z_i)\right) ^{y_i}
												 \left(\one_{(-\infty,0]}(z_i) \right) ^{1-y_i} \right\rbrace. \numbereqn
\end{align*}

\noindent
It follows from (\ref{pi_betaz|y}) that the full conditional density of $\betab$ given $\zb, \yb$ satisfies 
\begin{align*}
\pi(\betab|\zb,\yb) &\propto \exp \left[ -\frac{1}{2} \left\lbrace \betab^T\left(X^TX + Q\right) \betab - 2 \betab^T \left(\vb + X^T\zb \right) \right\rbrace \right]. \numbereqn \label{pi_beta|others}
\end{align*}

\noindent
From $(\ref{pi_beta|others})$ we can immediately conclude that conditional on $(\zb, \yb)$, $\betab$ is normally distributed with mean vector $(X^TX + Q)^{-1}(\vb + X^T\zb)$ and covariance matrix $(X^TX + Q)^{-1}$, i.e.,
\[
\left. \betab \: \right|\zb,\yb \sim \N_p \left( \left(X^TX + Q\right)^{-1}\left(\vb + X^T\zb\right), \left(X^TX + Q\right)^{-1} \right).
\]

\noindent
Again from (\ref{pi_betaz|y}), it is easy to see that the posterior density of $\zb$ given $\betab, \yb$ satisfies
\begin{align*}
\pi(\zb|\betab,\yb)  &\propto \prod_{i=1}^n \left \lbrace \phi\left(z_i - \xb^T_i \betab \right) \:
						 \left(\one_{(0,\infty)}(z_i)\right) ^{y_i}
												 \left(\one_{(-\infty,0]}(z_i) \right) ^{1-y_i} \right\rbrace. 
\numbereqn \label{pi_z|others}
\end{align*}

\noindent
It follows that for $i=1,\cdots,n$
\[
z_i\:| \: \betab, \yb \overset{\text{indep}}{\sim} \TN \left( \xb_i^T \betab , 1 , y_i \right)
\]
where $\TN(\mu,\sigma^2,\omega)$ denotes the distribution of a truncated normal variable with mean $\mu$ and variance $\sigma^2$ which is truncated to be positive if $\omega=1$ and negative if $\omega = 0$. 

Using the standard densities above, Albert and Chib \cite{Albert:Chib:1993} construct a data augmentation  Markov chain $\Psi = (\betab_m)_{m \geq 0}$. The transition of this Markov chain from $\betab_m$ to $\betab_{m+1}$ is given as follows.\\

\noindent
\begin{minipage}{\textwidth}
\hlinehere

$(m+1)$st iteration of AC-DA Markov chain $\Psi$: 

\hlinehere

\begin{enumerate}[label = (\roman*)]
\item Draw independent $z_1, \cdots, z_n$ with 
\[
z_i \sim \TN \left( \xb_i^T \betab , 1 , y_i \right),\; i = 1,\cdots,n
\]
and call  $\zb = (z_1,\cdots,z_n)^T$.

\item Draw $\betab \sim \N_p \left( \left(X^TX + Q\right)^{-1}\left(\vb + X^T\zb \right), \left(X^TX + Q\right)^{-1} \right)$.
\end{enumerate}

\hlinehere
\end{minipage}
\\

The above conditional densities have standard forms, therefore observations from the 
Markov chain $\Psi$ can be easily generated using any standard statistical software, 
such as R (\cite{R}; \cite{MASS}).  It can be easily shown that the transition density for 
$\Psi$ is strictly positive everywhere, which implies that $\Psi$ is Harris ergodic (see 
Asmussen and Glynn \cite{asmussen:glynn:2011}). It follows that cumulative averages 
based on the above Markov chain can be used to consistently estimate corresponding 
posterior expectations. However, providing standard errors for these estimates 
requires the existence of a Markov chain CLT (which is much more challenging to 
establish than the usual CLT in the IID/independent setup). A standard method 
available to prove a Markov chain CLT involves proving that the underlying Markov 
chain is geometrically ergodic (Chan and Geyer \cite{chan:geyer:1994}; 
Flegal and Jones \cite{flegal:jones:2010}; 
Mykland, Tierney and Yu \cite{mykland:tierney:yu:1995}; Robert \cite{robert:1995}). See Section \ref{geoergodicity} for more details. 

The first contribution of this paper is a proof of geometric ergodicity for $\Psi$  for 
\emph{all choices} of the design matrix $X$, sample size $n$ and number of  
predictors $p$ . It is to be noted that the original 
Albert and Chib \cite{Albert:Chib:1993} paper has been cited over 2450 times, 
indicating the wide range of applications and studies that have been made based on 
this data augmentation (DA)  algorithm. This highlights the importance of having 
consistent standard error estimates for quantities based on the DA Markov chain. 
As we explain in Section \ref{geoergodicity}, geometric ergodicity is an important 
ingredient for obtaining a theoretical guarantee for the validity of CLT based 
standard error estimates used by practitioners.  

Data Augmentation (DA) algorithms are typically slow-mixing and take a long time to converge. However, there exist \emph{sandwich algorithms} (Meng and van Dyk \cite{Meng:vanDyk:1999}; Liu and Wu \cite{liu:wu:1999}; Hobert and Marchev \cite{hobert:marchev:2008}) which can potentially significantly improve the DA algorithm by adding just one computationally inexpensive intermediate step  ``sandwiched'' between the two steps of the DA algorithm. These sandwich algorithms are theoretically proven to be at least as good as the original DA algorithm in terms of the operator norm (see Section~\ref{trace-class}). However, to show that the sandwich algorithm is strictly better, one needs to prove some additional properties of the DA Markov chain. 

As a second major contribution of this paper, we show in Section~\ref{trace-class} that the DA Markov chain $\Psi$ is \emph{trace-class}. The derivation is quite lengthy and involved (see Section \ref{trace-class}). The fact that a DA Markov chain is trace-class  ensures  that one can construct \emph{strictly better} sandwich algorithms (e.g., Haar PX-DA algorithms; see Section~\ref{sandalg}) in the sense that the (countable) spectrum of the DA algorithm point wise dominates the (countable) spectrum of the sandwich algorithm, with at least one strict inequality (Khare and Hobert \cite{khare:hobert:2011}). We would like to point out that no results regarding trace class properties in the improper flat prior case  are available in the literature. It is to be noted that our trace class results hold both when $n \geq p$ and $n<p$,  although some sufficient conditions on  $X$ and $Q$ need to be satisfied (see Theorem~\ref{thm2}).

Roy and Hobert \cite{Roy:Hobert:2007} prove the geometric ergodicity of the resultant 
algorithm when an improper flat prior (instead of a proper normal prior) on $\betab$ is 
considered, and also derive the PX-DA sandwich algorithm in this setting. Unlike our 
paper, these authors construct minorization conditions that allow them to use 
regeneration techniques for the consistent estimation of asymptotic variances. 
On the other hand, the trace class property under the improper prior is not 
investigated in \cite{Roy:Hobert:2007}. It is important to note that an improper flat 
prior on $\betab$ leads to a proper posterior only under the following conditions 
derived in Chen and Shao \cite{chen:shao:2007}: 
\begin{enumerate}
\item $n \geq p$ and the design matrix has full column rank,
\item there exits a vector $\ab = (a_1,\cdots,a_n)^T$ with strictly positive components such that $W^T\ab = 0$, where $W$ is an $n \times p$ matrix whose $i$th row is $\xb_i^T$ or $-\xb_i^T$ according as $y_i$ is 0 or 1.
\end{enumerate}

\noindent
Roy and Hobert \cite{Roy:Hobert:2007} show that the above conditions are sufficient 
to establish geometric ergodicity as well. However, these conditions clearly exclude 
the case of modern high dimensional problems where the dimension $p$ can be 
much larger than the sample size $n$. Hence, if $p>n$ one needs to work with a 
proper normal prior. In fact, we show that when a proper normal prior is assumed, no 
assumption on $n,\: p$ and $X$ is necessary to have geometric ergodicity.
\footnote{Roy \cite{roy:2012} proves the geometric ergodicity of a DA algorithm 
based on the \emph{robit} model, which uses a Student's $t$-distribution function 
instead of the standard normal distribution function for robustness. However, this 
robustness comes at a cost of increased complexity in analysis that makes the 
problem of proving geometric ergodicity much more challenging. This is apparent from 
the  rather restrictive nature of the sufficient conditions assumed in that paper.}  
If $n \geq p$, an improper flat prior is useful in the absence of any prior information 
or for objective Bayesian inference, while the proper prior is useful in the presence of 
prior information. To the best of our knowledge, there are no general technical results 
comparing the efficiency or behavior of the AC-DA algorithm in the 
proper/improper settings when $n \geq p$. 

The remainder of this article is organized as follows. In Section~\ref{geoergodicity}, we formally define geometric ergodicity and  prove that $\Psi$ is geometrically ergodic by establishing an appropriate \emph{drift condition}. In Section~\ref{trace-class} we review the notions of trace-class Markov chains and prove that under some easily verifiable sufficient conditions $\Psi$ is trace-class. In Section~\ref{sandalg}, we briefly  review the concepts of sandwich algorithms and derive the form of one such algorithm, namely the Haar PX-DA algorithm, corresponding to the AC-DA algorithm. In Section~\ref{appl} we provide an illustration based on a real dataset to exhibit the improvements that can be achieved  by using the Haar PX-DA algorithm over the AC-DA algorithm. In Appendix~\ref{appB}, proofs of some relevant mathematical results are provided.  A method for sampling from a density that appears in the Haar PX-DA algorithm is described in Appendix~\ref{appA}.      

\section{Geometric Ergodicity for the AC-DA chain} \label{geoergodicity}
In this section we  first formally define the notion of geometric ergodicity for a Markov chain and then we show that the AC-DA chain $\Psi$ is geometrically ergodic. Let $k(\cdot,\cdot)$ denote the Markov transition density associated with $\Psi$, with corresponding Markov transition function $K(\cdot,\cdot)$. In particular, for $\betab' \in \R^p$ and a measurable set $A \in \mathcal{B}$ (:= the Borel $\sigma$-field on $\R^p$), $K(\betab',A) = \int_{A} k(\betab',\betab)\:d\betab$. For $m \geq 1$, the corresponding $m-$step Markov transition function is defined in the following inductive fashion.
\[
K^m(\betab',A) = \int_{\R^p} K^{m-1}(\betab,A)\:K(\betab',d\betab) = \text{Pr}(\betab_{m+j} \in A|\betab_j = \betab')
\]
for all $j=0,1,2,\dots$; with $K^1 \equiv K$. Let $\Pi(\cdot|\yb)$ denote the probability measure associated with the posterior density $\pi(\betab|\yb)$, so that $\Pi(A|\yb) = \int_{A} \pi(\betab|\yb) d\betab$. Here $\pi(\betab|\yb)$ denotes the $\betab-$marginal of the joint density $\pi(\betab,\zb|\yb)$.  The chain $\Psi$ is geometrically ergodic if there exist a constant $\eta \in [0,1)$ and a function $Q:\R^p \rightarrow [0,\infty)$ such that for any $\betab \in \R^p$ and any $m \in \mathbb{N}$,
\[
\|K^m(\betab,\cdot) - \Pi(\cdot|\yb) \| := \sup_{A \in \mathcal{B}}|K^m(\betab,A) - \Pi(A|\yb) | \leq Q(\betab) \eta^m
\]
As mentioned in the introduction, geometric ergodicity implies existence of a CLT for 
Markov chain based cumulative averages. In particular, let $g \in L^2(\pi(\betab|\yb))$ 
such that $E_\pi g(\betab)^{2} < \infty$, and let $(\betab_N)_{N=1}^m$ denote the 
observations generated by the DA algorithm. Define $\bar{g}_m := m^{-1} \sum_{N=1}
^m g\left(\betab_N \right)$. If the (reversible) DA Markov chain is geometrically 
ergodic, then there exists $\sigma^2_{g} \in (0, \infty)$ such that $\sqrt{m}(\bar{g}_m - 
E_\pi g) \xrightarrow{d} N(0, \sigma^2_{g})$ as $m \rightarrow \infty$. Several 
methods for obtaining consistent estimators of $\sigma^2_g$ are available in the 
literature, see for example \cite{JHCN:2006,flegal:jones:2010}. These methods 
typically require additional moment assumptions on $g$ along with other mild 
regularity assumptions. 

The following theorem establishes geometric ergodicity of $\Psi$ by forming a (geometric) drift condition on the basis of the following (drift) function
\begin{align*}
\nu(\betab) = \betab^T \left( X^TX + Q \right) \betab
\end{align*}
The fact that $( X^TX + Q )$ is positive definite ensures that $\nu(\betab)$ is unbounded off compact sets as a function of $\betab$, i.e., for each $\alpha > 0$, the level set $\{\betab : \betab^T(X^TX + Q)\betab \leq \alpha\}$ is compact.

\begin{theorem}\label{thm1}
Let $k(\cdot, \cdot)$ denote the transition density corresponding to the Markov chain $\Psi$. Then for any arbitrary $\betab' \in \R^p$ representing the current state, there exists $\rho \in (0,1)$ and $L \in \R$ such that
\begin{align} \label{driftcond}
\int_{\R^p} \nu(\betab) k(\betab', \betab) \:d \betab \leq \rho \nu(\betab') + L.
\end{align} 
\end{theorem}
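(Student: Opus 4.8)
The plan is to evaluate the left-hand side of \eqref{driftcond}---the conditional expectation of $\nu$ after one step of $\Psi$ from $\betab'$---by conditioning on the intermediate latent vector $\zb$ produced in step (i), and to extract the contraction factor \emph{entirely} from the prior precision $Q$. Abbreviate $\Omega:=X^TX+Q$ (positive definite by hypothesis). Conditionally on $\zb$, step (ii) gives $\betab\sim\N_p\big(\Omega^{-1}(\vb+X^T\zb),\Omega^{-1}\big)$, which does not depend on the previous state, so the identity $E[W^TMW]=m^TMm+\operatorname{tr}(M\Sigma)$ for $W\sim\N(m,\Sigma)$ and symmetric $M$ yields
\[
E[\nu(\betab)\mid\zb]=(\vb+X^T\zb)^T\Omega^{-1}(\vb+X^T\zb)+\operatorname{tr}(\Omega\,\Omega^{-1})=\big\|\Omega^{-1/2}\vb+\Omega^{-1/2}X^T\zb\big\|^2+p.
\]
Applying $\|a+b\|^2\le(1+\epsilon)\|b\|^2+(1+\epsilon^{-1})\|a\|^2$ (for an $\epsilon>0$ to be fixed below) together with the elementary fact $X\Omega^{-1}X^T\preceq I_n$ (a standard consequence of $X^TX\preceq\Omega$, since $X\Omega^{-1}X^T$ has the same nonzero eigenvalues as $\Omega^{-1/2}(X^TX)\Omega^{-1/2}$), we obtain
\[
E[\nu(\betab)\mid\zb]\le(1+\epsilon)\,\zb^TX\Omega^{-1}X^T\zb+(1+\epsilon^{-1})\,\vb^T\Omega^{-1}\vb+p\le(1+\epsilon)\,\|\zb\|^2+(1+\epsilon^{-1})\,\vb^T\Omega^{-1}\vb+p.
\]

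The next step is to take the expectation of this bound over $\zb\mid\betab'$. The coordinates $z_i\sim\TN(\xb_i^T\betab',1,y_i)$ are independent, so $E[\|\zb\|^2\mid\betab']=\sum_{i=1}^nE[z_i^2\mid\betab']$, and a short truncated-normal computation (integration by parts) shows that, with $\mu_i:=\xb_i^T\betab'$,
\[
E[z_i^2\mid\betab']=\mu_i^2+1+s_i,\qquad s_i=\begin{cases}\mu_i\,\phi(\mu_i)/\Phi(\mu_i),&y_i=1,\\[3pt]-\mu_i\,\phi(\mu_i)/\Phi(-\mu_i),&y_i=0.\end{cases}
\]
In each case $s_i\le 0$ when $\mu_i$ has the ``wrong'' sign relative to $y_i$, and otherwise $s_i\le 2|\mu_i|\phi(|\mu_i|)\le 2\phi(1)=\sqrt{2/\pi}\,e^{-1/2}<1$, using $\Phi(|\mu_i|)\ge\tfrac12$ and $\max_{x\ge0}x\phi(x)=\phi(1)$; I would record this elementary estimate as a lemma. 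Hence $E[\|\zb\|^2\mid\betab']\le\sum_{i=1}^n(\mu_i^2+2)=\|X\betab'\|^2+2n=\betab'^TX^TX\betab'+2n$.

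The contraction then comes from the gap between $X^TX$ and $\Omega$. Since $Q$ is positive definite, $B:=\Omega^{-1/2}(X^TX)\Omega^{-1/2}=I_p-\Omega^{-1/2}Q\Omega^{-1/2}$ is positive semidefinite with largest eigenvalue $\rho_1:=\lambdamax(B)=1-\lambdamin\big(\Omega^{-1/2}Q\Omega^{-1/2}\big)\in[0,1)$, so $\betab'^TX^TX\betab'=(\Omega^{1/2}\betab')^TB(\Omega^{1/2}\betab')\le\rho_1\,\betab'^T\Omega\betab'=\rho_1\,\nu(\betab')$. Combining the three displays,
\[
\int_{\R^p}\nu(\betab)\,k(\betab',\betab)\,d\betab\le(1+\epsilon)\rho_1\,\nu(\betab')+\big[(1+\epsilon)\,2n+(1+\epsilon^{-1})\,\vb^T\Omega^{-1}\vb+p\big];
\]
since $\rho_1<1$, pick $\epsilon>0$ with $(1+\epsilon)\rho_1<1$, and then take $\rho$ to be any number in $[(1+\epsilon)\rho_1,1)\cap(0,1)$ and $L$ the bracketed constant (using $\nu\ge0$ to absorb the degenerate case $X^TX=0$). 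This is \eqref{driftcond}.

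I expect the one genuine obstacle to be conceptual, not computational: recognizing that the drift contraction must be drawn entirely from the prior---the positive-definite summand $Q$ of $X^TX+Q$---because when every fitted value $\xb_i^T\betab'$ already carries the sign dictated by $y_i$, the truncated-normal step contributes no contraction at all (there $E[z_i^2\mid\betab']\approx\mu_i^2$). Once this is seen, the remaining ingredients---the Gaussian second-moment identity, the truncated-normal second-moment formula together with the $O(1)$ bound on its correction $s_i$, and the spectral inequality $\betab'^TX^TX\betab'\le\rho_1\,\nu(\betab')$---are routine, and, notably, no condition on $X$, $n$, or $p$ is used anywhere.
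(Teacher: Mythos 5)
Your proposal is correct and follows essentially the same route as the paper: condition on $\zb$, apply the Gaussian quadratic-form identity to get $p+\|(X^TX+Q)^{-1/2}(\vb+X^T\zb)\|^2$, split off the $\vb$ term with a Young-type inequality, bound the truncated-normal second moments by $(\xb_i^T\betab')^2+O(1)$, and extract the contraction from the positive definiteness of $Q$. The only (minor) difference is where the spectral gap enters: the paper bounds $\zb^TX(X^TX+Q)^{-1}X^T\zb$ by $\lambdamax\,\|\zb\|^2$ with $\lambdamax=\lambdamax\bigl(X(X^TX+Q)^{-1}X^T\bigr)<1$ (proved via an SVD in its appendix) and then uses $\betab'^TX^TX\betab'\le\nu(\betab')$, whereas you bound the $\zb$-form crudely by $\|\zb\|^2$ and apply the gap at the end through $\betab'^TX^TX\betab'\le\rho_1\,\nu(\betab')$ with $\rho_1=\lambdamax\bigl(\Omega^{-1/2}X^TX\Omega^{-1/2}\bigr)=1-\lambdamin\bigl(\Omega^{-1/2}Q\Omega^{-1/2}\bigr)$; since these two matrices have the same nonzero spectrum, the resulting drift coefficient is identical, and your version merely replaces the appendix proposition by the identity $\Omega^{-1/2}X^TX\Omega^{-1/2}=I_p-\Omega^{-1/2}Q\Omega^{-1/2}$.
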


\begin{proof}

On the outset, note that the transition density corresponding to the AC-DA Markov chain is given by
\[
k(\betab', \betab) = \int_{\Z} \pi(\betab | \zb,\yb) \pi(\zb | \betab',\yb) \:d\zb, 
\]

\noindent
where $\Z$ denotes the space where the random vector $\zb$ lives, i.e., $\Z$ is the Cartesian product of $n$ half lines $\R_+ = (0, \infty)$ or $\R_- = (-\infty, 0]$ according as $y_i = 1$ or $0$. Therefore, by Fubini's theorem, we get
\begin{align*}\label{E(betagivenall)}
\int_{\R^p} \nu(\betab) k(\betab', \betab) \:d \betab &= \int_{\Z} \left \lbrace { \int_{\R^p} \nu(\betab) \: \pi(\betab|\zb,\yb) \: d\betab } \right \rbrace \pi(\zb | \betab',\yb)\:d\zb. \numbereqn
\end{align*}

\noindent
The inner integral in the right hand side of (\ref{E(betagivenall)}) is given by
\begin{align*}\label{E(beta)}
\quad \int_{\R^p} \nu(\betab) \: \pi(\betab|\zb,\yb) \: d\betab
	&= E \left( \left. \betab^T \left( X^TX +Q \right) \betab \: \right| \zb,\yb \right) \\
	&= \text{trace} \left( \left( X^TX +Q \right) \var(\betab|\zb,\yb) \right) \\ 
	&\qquad +  E(\betab| \zb,\yb)^T \left( X^TX +Q \right) E(\betab| \zb,\yb)  \\
	&= p + \left( X^T\zb +\vb \right)^T \left( X^TX +Q \right)^{-1} \left( X^T\zb +\vb \right)  \\
	&= p + \left\| \left( X^TX +Q \right)^{-1/2} \left( X^T\zb +\vb \right) \right\|^2.  \numbereqn \end{align*} 

\noindent
Note that for any  $\ab \in \R^p, \bbo \in \R^p$ and $c>0$,
\begin{align*}
\| \ab + \bbo \|^2
	&= \|\ab\|^2 + \|\bbo\|^2 + 2 (c \ab)^T \left(\frac{1}{c} \: \bbo\right) \\
	&\leq \|\ab\|^2 + \|\bbo\|^2 + c^2 \|\ab\|^2 + \frac{1}{c^2} \|\bbo\|^2 \\
	&= \left(1+c^2\right) \|\ab\|^2 + \left( 1+ \frac{1}{c^2}\right) \|\bbo\|^2.
\end{align*}  

\noindent Therefore, by taking $\ab =  \left( X^TX +Q \right)^{-1/2}  X^T\zb$, $\bbo = \left( X^TX +Q \right)^{-1/2} \vb$ and any $c > 0$, we get the following upper bound for $(\ref{E(beta)})$: 
\begin{align*}\label{E(beta)upperbd}
&\quad  p + \left(1+c^2\right) \left\| \left( X^TX +Q \right)^{-1/2}  X^T\zb \: \right\|^2 + \left( 1+ \frac{1}{c^2}\right) \left\| \left( X^TX +Q \right)^{-1/2} \vb \: \right\|^2 \\
&= \left(1+c^2\right) \left\| \left( X^TX +Q \right)^{-1/2}  X^T\zb \: \right\|^2 + A_1 \numbereqn
\end{align*}
where
\[
A_1 = A_1(c) = p + \left( 1+ \frac{1}{c^2}\right) \left\| \left( X^TX +Q \right)^{-1/2} \vb \: \right\|^2.
\]

\noindent Hence, from (\ref{E(betagivenall)}), (\ref{E(beta)}) and $(\ref{E(beta)upperbd})$ we can write for any $c>0$,
\begin{align*}\label{fullupperbd}
& \quad \int_{\Z} \left \lbrace { \int_{\R^p} \nu(\betab) \: \pi(\betab|\zb,\yb) \: d\betab } \right \rbrace \pi(\zb | \betab',\yb)\:d\zb \\
& \leq \left(1+c^2\right) \int_{\R^n} \left\| \left( X^TX +Q \right)^{-1/2}  X^T\zb \: \right\|^2 \: \pi(\zb|\betab,\yb)\: d\zb \: +  \:A_1 \\
&= \left(1+c^2\right) E\left( \left. \zb^TX \left( X^TX +Q \right)^{-1}  X^T\zb \: \right| \betab',\yb \right) \: + \: A_1. \numbereqn
\end{align*}

\noindent Now, note that
\begin{align*}\label{E(ztXothers)}
E\left( \left. \zb^TX \left( X^TX +Q \right)^{-1}  X^T\zb \: \right| \betab',\yb \right) 
	&= E\left( \left. \zb^T\Xt \left( \Xt^T\Xt + I_p \right)^{-1}  \Xt^T\zb \: \right| \betab',\yb \right) \\
	&\leq \lambdamax \: E \left( \left. \zb^T \zb \right| \betab', \yb \right)\\
	&= \lambdamax \: \sum_{i=1}^n E \left( \left. z_i^2 \: \right| \betab', \yb \right). \numbereqn
\end{align*}
Here $\Xt = X Q^{-1/2}$, and $\lambdamax = \lambdamax(\Xt\: ( \Xt^T\Xt + I_p)^{-1} \: \Xt^T)$ denotes the largest eigenvalue of $\Xt\: ( \Xt^T\Xt + I_p)^{-1} \: \Xt^T$. Standard results from the theory of truncated normal distributions show that (see Roy and Hobert \citep{Roy:Hobert:2007})
\[
U \sim \TN(\xi,1,1) \implies E U^2 = 1 + \xi^2 + \frac{\xi \phi(\xi)}{\Phi(\xi)}
\]
and
\[
U \sim \TN(\xi,1,0) \implies E U^2 = 1 + \xi^2 - \frac{\xi \phi(\xi)}{1-\Phi(\xi)}.
\]

\noindent Therefore, it follows that for all $i = 1,\cdots,n$
\[
E \left( \left. z_i^2 \: \right| \betab', \yb \right) =
\begin{cases}
1 + \left(\xb_i^T\betab'\right)^2 + \frac{\left(\xb_i^T\betab'\right) \phi\left(\xb_i^T\betab'\right)}{\Phi\left(\xb_i^T\betab'\right)} & \text{if } y_i = 1 \\
1 + \left(\xb_i^T\betab'\right)^2 - \frac{\left(\xb_i^T\betab'\right) \phi\left(\xb_i^T
\betab'\right)}{1-\Phi\left(\xb_i^T\betab'\right)} & \text{if } y_i = 0
\end{cases}.
\]

\noindent A more compact way of expressing this is as follows.
\[
E \left( \left. z_i^2 \: \right| \betab', \yb \right) = 1 + \left(\wb_i^T\betab'\right)^2 - \frac{\left(\wb_i^T\betab'\right) \phi\left(\wb_i^T\betab'\right)}{1-\Phi\left(\wb_i^T\betab'\right)}
\]
where
\[
\wb_i = 
\begin{cases}
\xb_i & \text{if } y_i = 0 \\
-\xb_i & \text{if } y_i = 1
\end{cases}.
\]

\noindent Now, for all $i=1,\cdots,n$, 
\begin{align*}
-\frac{\left(\wb_i^T\betab'\right) \phi\left(\wb_i^T\betab'\right)}{1-\Phi\left(\wb_i^T\betab'\right)}
&\leq 
\begin{cases}
\left| \frac{\left(\wb_i^T\betab'\right) \phi\left(\wb_i^T\betab'\right)}{1-\Phi\left(\wb_i^T\betab'\right)} \right| & \text{ if }\wb_i^T\betab' \leq 0 \\
0 & \text{ if } \wb_i^T\betab' > 0 
\end{cases}\\
&\leq \sup_{u \in (-\infty,0]} \left| \frac{u \phi\left(u \right)}{1-\Phi\left(u \right)} \right|
=: \Lambda
\end{align*}
and it is clear that $\Lambda \in (0,\infty)$. This implies, for all $i = 1, \cdots, n$
\[
E \left( \left. z_i^2 \: \right| \betab', \yb \right) \leq 1 + \left(\wb_i^T\betab'\right)^2 + \Lambda
= 1 + \left(\xb_i^T\betab'\right)^2 + \Lambda.
\]

\noindent Therefore, from $(\ref{E(ztXothers)})$ we can write,
\begin{align*} \label{geofinal}
E\left( \left. \zb^TX \left( X^TX +Q \right)^{-1}  X^T\zb \: \right| \betab',\yb \right) 
&< \lambdamax \: \sum_{i=1}^n \left(\xb_i^T\betab'\right)^2 + A_2 \\
&= \lambdamax \: {\betab'}^T \left( X^TX \right) \betab' + A_2 \\
&\leq \lambdamax \: {\betab'}^T \left( X^TX +Q \right) \betab' + A_2 \\
&= \lambdamax \: \nu(\betab') + A_2 \numbereqn
\end{align*} 
where $A_2 = n \: \lambdamax \: (1+\Lambda) < \infty$, and inequality in the  second last line follows from the fact that $Q$ is positive definite.\\

\noindent Finally, combining  (\ref{fullupperbd}), and (\ref{geofinal}), we get
\begin{align*}
\int_{\R^p} \nu(\betab) k(\betab', \betab) \:d \betab 
&\leq (1+c^2)\: \lambdamax\: \nu(\betab') + L \\
&= \rho(c)\: \nu(\betab') + L  
\end{align*}
where $L = L(c) = A_1(c) + (1+c^2)A_2$, $\rho(c) = (1+c^2)\lambdamax$ and $c>0$ is arbitrary. It remains to show that there exists $c > 0$ such that $0< \rho(c) <1$. It follows from Proposition~\ref{prop1} in Appendix~\ref{appB}  that $\lambdamax \in (0,1)$. So, for any $c$, $\rho(c) = \lambdamax(1+c^2) >0$. To show that there exits $c>0$ such that $\rho(c) < 1$, take any $c_0 \in \left(0, \sqrt{\lambdamax^{-1}-1}\right)$ $\left(\text{e.g. } c_0 = \frac{1}{2}\sqrt{\lambdamax^{-1} - 1} \right)$.  Then
\[
1+c_0^2 < \lambdamax^{-1} \implies \rho(c_0) = \lambdamax (1+c_0^2) < 1.
\]

\noindent This completes the proof.

\end{proof}

\begin{remark}
As mentioned earlier, since $(X^TX + Q)$ is always positive definite, $\nu(\betab)$ is unbounded off compact sets for any design matrix $X$. Therefore, from Meyn and Tweedie  \cite[Lemma 15.2.8]{meyn:tweedie:1993} and Theorem~\ref{thm1}, it follows that for \textit{any} $X$, $n$ and $p$, the AC-DA Markov chain $\Psi$ is geometrically ergodic. 
\end{remark}

\section{Trace-class property for the AC-DA chain} \label{trace-class}

Recall that the AC-DA Markov chain $\Psi$ has associated transition density given by 
\begin{align}\label{mtd}
k(\betab', \betab) = \int_{\mathcal{Z}} \pi(\betab | \zb,\yb) \pi(\zb | \betab',\yb) \:d\zb. 
\end{align}
Let $L_0^2 (\pi(\cdot \mid {\bf y}))$ denote the space of square-integrable functions with mean zero (with respect to  the posterior density 
$\pi(\betab \mid {\bf y})$). Let $K$ denote the Markov operator on $L_0^2 (\pi(\cdot \mid {\bf y}))$ associated with the transition 
density $k$. Note that the Markov transition density $k$ is reversible with respect to its invariant distribution, and $K$ is a positive, 
self-adjoint operator. The operator $K$ is {\it trace class} (see J{\"o}rgens \cite{jorgens:1982}) if 
\begin{align} \label{tracegen}
\int_{\R^p} k(\betab, \betab) \; d \betab < \infty.
\end{align} 
If the trace-class property holds, then $K$ is compact, and its eigenvalues are summable (stronger than square 
summable), which in particular also implies that the associated Markov chain is geometrically ergodic. The trace class property for 
a DA Markov chain has another important implication. Hobert and Marchev \cite{hobert:marchev:2008} define a class of sandwich 
algorithms called Haar PX-DA algorithms which they show to be optimal in an appropriate sense. If a DA algorithm is trace class, 
then so is the Haar PX-DA algorithm. Furthermore, the spectrum of the Haar PX-DA operator is \emph{strictly better} than the DA algorithm in the 
sense that the (countable) spectrum for the Haar PX-DA algorithm is dominated pointwise by the spectrum of the DA algorithm, with at least one strict domination (Khare and Hobert \cite{khare:hobert:2011}). See Section \ref{sandalg} for more details. 

The following theorem provides sufficient conditions under which the Markov operator $K$ corresponding to the AC-DA algorithm 
is trace class. 
\begin{theorem} \label{thm2}
Let $X$, the design matrix, have either full column rank (if $n \geq p$) or full row rank (if $n < p$). Then, the AC-DA Markov chain 
$\Psi$ is trace-class if 
\begin{enumerate}[label = (\Alph*)]
\item \label{trcond1} All eigenvalues (or all non-zero eigenvalues, if $n < p$) of $Q^{-1/2}X^TX Q^{-1/2}$ are less than $7/2$, OR 
\item \label{trcond2} $X Q^{-1/2}$ is rectangular diagonal. 
\end{enumerate}
\end{theorem}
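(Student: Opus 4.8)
The plan is to verify the trace-class criterion \eqref{tracegen}, i.e., that $\int_{\R^p} k(\betab,\betab)\,d\betab<\infty$, by first writing the diagonal $k(\betab,\betab)$ in closed form and then bounding its integral. Since $Q$ is positive definite, the substitution $\betab\mapsto Q^{1/2}\betab$ turns the AC--DA chain for $(X,Q,\vb)$ into the AC--DA chain for design matrix $\Xt=XQ^{-1/2}$, unit prior precision, and prior location $Q^{-1/2}\vb$, and it leaves $\int k(\betab,\betab)\,d\betab$ unchanged (this integral is the trace of the associated integral operator, a similarity invariant); conditions~\ref{trcond1} and~\ref{trcond2} become, respectively, ``the nonzero eigenvalues of $\Xt^T\Xt$ are $<7/2$'' and ``$\Xt$ is rectangular diagonal''. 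So I may assume $Q=I_p$, write $X$ for $\Xt$, and set $M=X^TX+I_p$, $N=2X^TX+I_p$. Next, inserting the Gaussian density $\pi(\betab\mid\zb,\yb)$ and the truncated-normal density $\pi(\zb\mid\betab,\yb)=\prod_i\phi(z_i-\xb_i^T\betab)\,\one_{\Z}(\zb)\big/\prod_i\Phi(-\wb_i^T\betab)$ (with the $\wb_i$ as in Section~\ref{geoergodicity}) into \eqref{mtd}, collecting the exponent as a function of $\zb$ and completing the square in $\zb$, and using Sylvester's determinant identity for the normalizing constants together with identities such as $2X^TX-I_p-4X^TXN^{-1}X^TX=-N^{-1}$, one obtains
\begin{align*}
k(\betab,\betab)\;=\;c^{\ast}\cdot
\frac{\exp\!\left(-\tfrac12(\betab-\vb)^{T}N^{-1}(\betab-\vb)\right)}{\prod_{i=1}^{n}\Phi\!\left(-\wb_i^{T}\betab\right)}\cdot
\Pr\!\left(\bm{G}_{\betab}\in\Z\right),
\end{align*}
where $c^{\ast}>0$ is a constant and $\bm{G}_{\betab}\sim\N_n\!\left(XN^{-1}(2M\betab-\vb),\,I_n-XN^{-1}X^{T}\right)$.

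The crude estimate $\Pr(\bm{G}_{\betab}\in\Z)\le1$, combined with the Mills-ratio bound $1/\Phi(-t)\le c_0\,(1+t^{+})\exp\!\big((t^{+})^2/2\big)$ valid for all real $t$, gives $k(\betab,\betab)\le(\text{polynomial})\cdot\exp\!\big(\tfrac12\sum_i((\wb_i^T\betab)^{+})^2-\tfrac12\betab^TN^{-1}\betab\big)$. As $\sum_i((\wb_i^T\betab)^{+})^2\le\betab^TX^TX\betab$ and $\betab^TN^{-1}\betab\ge\|\betab\|^2/(2\lambdamax(X^TX)+1)$, this is integrable once $\lambdamax(X^TX)<1/2$; getting past this threshold forces one to genuinely exploit $\Pr(\bm{G}_{\betab}\in\Z)<1$.

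The key observation is that $\Pr(\bm{G}_{\betab}\in\Z)$ is small precisely when $1/\prod_i\Phi(-\wb_i^T\betab)$ is large: for an index $i$ with $\wb_i^T\betab>0$, the $i$-th coordinate of the mean $XN^{-1}(2M\betab-\vb)$ of $\bm{G}_{\betab}$ points to the ``wrong'' side of the orthant $\Z$. Writing $\bm{m}_{\betab},\Sigma_{\betab}$ for that mean and covariance, a one-sided Chernoff estimate gives $\Pr(\bm{G}_{\betab}\in\Z)\le\exp\!\big(-\tfrac12\langle\bm{\lambda},\bm{m}_{\betab}\rangle^2/\langle\bm{\lambda},\Sigma_{\betab}\bm{\lambda}\rangle\big)$ for any $\bm{\lambda}$ with $\langle\bm{\lambda},\zb\rangle\ge0$ on $\Z$ and $\langle\bm{\lambda},\bm{m}_{\betab}\rangle\le0$; taking $\bm{\lambda}$ adapted to $\{i:\wb_i^T\betab>0\}$ and using $\Sigma_{\betab}\preceq I_n$ produces an extra decay factor $\exp(-\tfrac12 q(\betab))$ for a positive semidefinite quadratic form $q$ that dominates, by a controlled amount, the growth of $1/\prod_i\Phi(-\wb_i^T\betab)$ in the relevant directions. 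Diagonalizing $X^TX$ and combining the three exponential factors, the requirement that $k(\betab,\betab)$ decay in an eigendirection with eigenvalue $\lambda$ reduces to a scalar inequality in $\lambda$, and optimizing the constants in the tail bounds shows it holds for all $\lambda<7/2$; since there are finitely many eigendirections and on the null space of $X^TX$ the kernel is a genuine Gaussian, \eqref{tracegen} follows, establishing case~\ref{trcond1}. In case~\ref{trcond2}, if $X$ is rectangular diagonal then $M$, $N$, and $I_n-XN^{-1}X^T$ are diagonal and both $\Z$ and the $\wb_i$ decouple coordinatewise, so $\Pr(\bm{G}_{\betab}\in\Z)$ factorizes and $k(\betab,\betab)$ splits into a product of one-dimensional diagonal kernels (times harmless constant or Gaussian factors coming from zero rows/columns of $X$); a direct one-variable computation shows the factor attached to a diagonal entry $\lambda$ of $X^TX$ decays like $\exp(-\tfrac12(\lambda+1)\beta^2)$ and is integrable for every $\lambda\ge0$, so no eigenvalue restriction is needed here.

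The step I expect to be the main obstacle is the one just described: controlling the $n$-dimensional Gaussian orthant probability $\Pr(\bm{G}_{\betab}\in\Z)$ and quantifying, uniformly in $\betab$, how much of the blow-up of $1/\prod_i\Phi(-\wb_i^T\betab)$ it offsets, together with the careful bookkeeping needed to push the admissible eigenvalue range up to exactly $7/2$. The Gaussian and determinant algebra leading to the closed form, though only routine manipulation, is also lengthy.
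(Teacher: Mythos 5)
Your opening reduction and the closed form for the diagonal kernel are correct: with $Q=I_p$, $M=X^TX+I_p$, $N=2X^TX+I_p$, completing the square in $\zb$ does give $k(\betab,\betab)=c^{\ast}\exp\bigl(-\tfrac12(\betab-\vb)^TN^{-1}(\betab-\vb)\bigr)\Pr(\bm{G}_{\betab}\in\Z)\big/\prod_{i=1}^n\Phi(-\wb_i^T\betab)$ with the stated mean and covariance (I checked the identities $(I_n-XN^{-1}X^T)X=XN^{-1}M$ and $N-4X^TX+4X^TXN^{-1}X^TX=N^{-1}$), and your treatment of case \ref{trcond2} is sound, since everything factorizes coordinatewise and the one‑dimensional rate works out to $\exp\bigl(-\tfrac12(1+d_i^2)\beta_i^2\bigr)$ in the bad direction. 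This is a genuinely different route from the paper, which never computes the orthant probability: it bounds the inner $\bt$-integral over $\R^n_-$ by the full-space Gaussian integral after discarding the favorable terms $t_i\wt_i^T\thetab$, $i\in\zeta$, and then reduces the whole problem to positive definiteness of an explicit quadratic form $H^{*}(\ab)$ in the positive/negative parts $\ab_1,\ab_2$ of $\Wt\thetab$, settled by an SVD and the scalar inequality $1+2k>(1-k)^2(1+4k)$ for $k<7/4$ — which is where $7/2$ comes from.

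The gap is in your case \ref{trcond1}: the sentence ``diagonalizing $X^TX$ and combining the three exponential factors \dots optimizing the constants in the tail bounds shows it holds for all $\lambda<7/2$'' is precisely the hard part of the theorem and is asserted, not proved. Three concrete obstacles are left open. First, you never exhibit an admissible $\bm{\lambda}$ (supported on the correct half-lines) with $\langle\bm{\lambda},\bm{m}_{\betab}\rangle\le 0$ and quantify its decay uniformly over each sign-pattern region: the mean is $2XN^{-1}M\betab$, and $\wb_i^T\betab>0$ does \emph{not} imply $\wb_i^TN^{-1}M\betab>0$, so wrong-side mean coordinates need not exist where you want them; moreover a single-halfspace Chernoff bound gives $(\sum_{i}\mu_i)^2/\bm{\lambda}^T\Sigma_{\betab}\bm{\lambda}$, which by Cauchy--Schwarz can be much smaller than the coordinatewise sum $\sum_i\mu_i^2$ that your heuristic requires when the wrong-side components are unbalanced. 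Second, the bad set $\{i:\wb_i^T\betab>0\}$ is tied to the rows $\wb_i$, i.e.\ to the coordinates of $\R^n$, not to eigendirections of $X^TX$, so the claimed reduction ``to a scalar inequality in $\lambda$'' does not follow from diagonalization; handling exactly this interaction is what forces the paper's $\ab_1,\ab_2$ bookkeeping (with $\ab_1^T\ab_2=0$ and $\Ut^T(\ab_1+\ab_2)=0$) and the matrix inequality comparing the cross term to the two diagonal blocks. Third, since your chain of bounds (exact orthant probability, then Chernoff) differs from the paper's chain of relaxations, there is no a priori reason the optimized threshold equals $7/2$; it must be computed, and if it came out below $7/2$ the stated theorem would not be established. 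Until the quadratic form $\betab^TN^{-1}\betab+q(\betab)-\sum_{i:\wb_i^T\betab>0}(\wb_i^T\betab)^2$ is shown to be bounded below by $\delta\|\betab\|^2$ on every sign-pattern region under condition \ref{trcond1}, the proof of case \ref{trcond1} is incomplete.
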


\begin{proof}

\noindent
We shall show that (\ref{tracegen}) holds for the Markov chain $\Psi$ if either  
\ref{trcond1} or \ref{trcond2} holds. Conditions \ref{trcond1} and 
\ref{trcond2} will not play a role at all in the first half of this proof, but will be 
needed to show the positivity of an appropriate function in the second half of 
the proof. 

\noindent First, note that (\ref{pi_z|others}) implies
\begin{align*}
\pi(\zb|\betab,\yb) &\propto \exp\left(-\frac{1}{2} \sum_{i=1}^n (z_i - \xb_i^T \betab)^2 
\right) \\
& \qquad \times \prod_{i=1}^n \left\lbrace \left(\frac{1}{\Phi(\xb_i^T\betab)}\right)^{y_i} 
\left(\frac{1}{1- \Phi(\xb_i^T\betab)}\right)^{1 - y_i} \right\rbrace\\
&=	\exp\left(-\frac{1}{2}  (\zb - X \betab)^T (\zb - X \betab) \right) \\ 
& \qquad \times \prod_{i=1}^n \left\lbrace \left(\frac{1}{\Phi(\xb_i^T\betab)}\right)^{y_i} 
\left(\frac{1}{1- \Phi(\xb_i^T\betab)}\right)^{1 - y_i} \right\rbrace. 
\end{align*}

\noindent Now, let us define for $i = 1, \cdots, n$,
\[
t_i =
\begin{cases}
z_i , & \text{if } y_i = 0 \\
-z_i , & \text{if } y_i = 1
\end{cases}
;\quad
\wb_i = 
\begin{cases}
\xb_i , & \text{if } y_i = 0 \\
-\xb_i , & \text{if } y_i = 1
\end{cases}
;\quad \text{and }
W_{n \times p} =
\begin{pmatrix}
\wb_1^T \\
\vdots \\
\wb_n^T
\end{pmatrix}.
\]

\noindent Then $X^TX = W^TW$ and $X^T\zb = W^T\bt$, and absolute value of the Jacobian of the transformation $\zb \rightarrow \bt$ is one. So the conditional density $\pi(\bt|\betab,\yb)$ of $\bt$ given $\betab, \yb$ satisfies 
\begin{align*} \label{pi_t}
\pi(\bt|\betab,\yb) & \propto \exp\left(-\frac{1}{2} \{ \bt^T\bt - 2 \betab^T W^T \bt + \betab^T W^T W \betab  \}  \right) \\
& \quad \times  \prod_{i=1}^n \left(\frac{1}{1- \Phi(\wb_i^T\betab)}\right). \numbereqn
\end{align*}

\noindent Again, simple calculations on (\ref{pi_betaz|y}) show that
\begin{align*}
\pi(\betab | \zb,\yb) 
&\propto \exp \left[ -\frac{1}{2} \left\lbrace \betab^T(X^TX+Q)\betab - 2\betab^T(X^T\zb + \vb) \right. \right. \\ 
& \qquad \qquad \quad + \left. \left. (X^T\zb+\vb)^T (X^TX+Q)^{-1} (X^T\zb+\vb) \right\rbrace \right] \\
& \propto \exp \left[ -\frac{1}{2} \left\lbrace \betab^T(X^TX+Q)\betab - 2\betab^T X^T \zb \right. \right. \\
& \qquad \qquad \quad - \left. \left. 2 \betab^T \vb +   \zb^T X (X^TX+Q)^{-1} X^T \zb \right. \right. \\
& \qquad \qquad \quad + \left. \left. 2 \zb^T X (X^TX+Q)^{-1} \vb\right\rbrace \right].
\end{align*}
so that
\begin{align} \label{pi_beta}
\pi(\betab | \bt,\yb) 
& \propto \exp \left[ -\frac{1}{2} \left\lbrace \betab^T(W^TW+Q)\betab - 2\betab^T W^T \bt  \right. \right. \nonumber \\
& \qquad \qquad - \left. \left.  2 \betab^T \vb + \bt^T W (W^T W + Q)^{-1} W^T \bt  \right. \right. \nonumber \\ 
& \qquad \qquad - \left. \left.  2 \bt^T W (W^T W + Q)^{-1} \vb \right\rbrace \right].
\end{align}

\noindent  Therefore, using (\ref{mtd}), (\ref{pi_t}) and (\ref{pi_beta}), we get the following form for the integral in (\ref{tracegen}) in the current setting.
\begin{align} \label{I_expr1}
I &:= \int_{\mathbb{R}^p} \int_{\mathcal{Z}} \pi(\betab|\zb,\yb) \pi(\zb|\betab,\yb) \:d\zb \:d\betab \nonumber \\
&= \I \pi(\betab|\bt,\yb) \pi(\bt|\betab,\yb) \:d\bt \:d\betab \nonumber \\
&= C_0 \I   \exp \left[ -\frac{1}{2} \left\lbrace \betab^T(W^TW+Q)\betab - 2\betab^T W^T \bt  \right. \right. \nonumber \\
& \qquad \qquad \qquad \qquad \qquad \quad - \left. \left.  2 \betab^T \vb + \; \bt^T W (W^T W + Q)^{-1} W^T \bt \right. \right. \nonumber \\ 
& \qquad \qquad \qquad \qquad \qquad \quad + \left. \left. 2 \bt^T W (W^T W + Q)^{-1} \vb \right\rbrace \right] \nonumber \\
& \qquad \qquad \qquad \quad \times \exp\left(-\frac{1}{2} \{ \bt^T\bt - 2 \betab^T W^T \bt + \betab^T W^T W \betab  \}  \right)  \nonumber \\
& \qquad \qquad \qquad \quad \times \prod_{i=1}^n \left(\frac{1}{1- \Phi(\wb_i^T\betab)}\right) \:d\bt \:d\betab.
\end{align}

\noindent Here $C_0$ denotes the product of all constant terms (independent of $\betab$ and $\bt$) appearing in the full conditional densities $\pi(\betab|\bt,\yb)$ and $\pi(\bt|\betab,\yb)$. Let us define $\thetab = Q^{-1/2} \betab$, $\Wt = W Q^{-1/2}$ and $\vt = Q^{-1/2} \vb$. Absolute value of the Jacobian of the transformation $\betab \rightarrow \thetab$ is $\{\det(Q)\}^{-1/2} > 0$. Therefore, the right hand side of $(\ref{I_expr1})$ is proportional to 
\begin{align} \label{I_expr2}
& \I  \exp \left[ -\frac{1}{2} \left\lbrace \thetab^T(\Wt^T \Wt + I_p)\thetab - 2\thetab^T \Wt^T \bt - 2 \thetab^T \vt \right. \right. \nonumber \\
& \qquad \qquad \qquad \quad \left. \left. +\; \bt^T \Wt (\Wt^T W + I_p)^{-1} \Wt^T \bt + 2 \bt^T \Wt (\Wt^T \Wt + I_p)^{-1} \vt \right\rbrace \right] \nonumber \\
&  \qquad \qquad \times \exp\left(-\frac{1}{2} \{ \bt^T\bt - 2 \thetab^T \Wt^T \bt + \thetab^T \Wt^T \Wt \thetab  \}  \right)  \nonumber \\
&  \qquad \qquad \times \prod_{i=1}^n \left(\frac{1}{1- \Phi(\wt_i^T\thetab)}\right) \:d\bt \:d\thetab \nonumber \\
&= \int_{\mathbb{R}^p} \frac{\exp \left[ -\frac{1}{2} \left\lbrace \thetab^T(2\Wt^T \Wt + I_p)\thetab - 2 \thetab^T \vt \right\rbrace \right]} {\prod_{i=1}^n \left(1- \Phi(\wt_i^T\thetab)\right)} \nonumber \\
& \qquad  \quad \left( \int_{\mathbb{R}^n_-} \exp\left[ 2 \thetab^T \Wt^T \bt  - \bt^T \Wt (\Wt^T \Wt + I_p)^{-1} \vt \right. \right. \nonumber \\
& \qquad  \qquad \qquad \qquad -\frac{1}{2} \left. \left.  \bt^T \left( I_n + \Wt (\Wt^T \Wt + I_p)^{-1} \Wt^T\right) \bt \right] \:d\bt  \right) \:d\betab.
\end{align} 

\noindent Now consider the partition 
\[
\mathbb{R}^p = \biguplus_{\zeta \subseteq \{1,\cdots,n\} } A_\zeta
\]
where
\begin{align*} 
A_\zeta = \{\thetab : \wt_i^T \thetab > 0 \text{ if } i \in \zeta \text{ and } \wt_i^T\thetab \leq 0 \text{ if } i  \notin \zeta  \}.
\end{align*}

\noindent 
The above partition is essentially obtained by using the $n$ hyperplanes defined by $\wt_i^T \thetab = 0$ for $1 \leq i \leq n$. This partition has also been used in 
\cite{Roy:Hobert:2007} for proving geometric ergodicity of the DA Markov chain corresponding to an improper flat prior on $\betab$. The right hand side of $(\ref{I_expr2})$ 
can now be written as 
\begin{align*}
& \sum_{\zeta \subseteq \{1,\cdots,n\}} \int_{A_\zeta} \frac{\exp \left[ -\frac{1}{2} \left\lbrace \thetab^T(2\Wt^T \Wt + I_p)\thetab - 2 \thetab^T \vt \right\rbrace \right]}{\prod_{i=1}^n \left(1- \Phi(\wt_i^T\thetab)\right)} \\
& \qquad \qquad \quad \left( \int_{\mathbb{R}^n_-} \exp\left[ 2 \thetab^T \Wt^T \bt  - \bt^T \Wt (\Wt^T W + I_p)^{-1} \vt \right. \right. \\
& \qquad \qquad \qquad \qquad \qquad -\frac{1}{2} \left. \left.  \bt^T \left( I_n + \Wt (\Wt^T \Wt + I_p)^{-1} \Wt^T\right) \bt \right] \:d\bt  \right) \:d\thetab  \\
&= \sum_{\zeta \subseteq \{1,\cdots,n\}} I_{A_\zeta}, \text{ say} 
\end{align*}
where
\begin{align*}\label{I_Azeta}
I_{A_\zeta} &= \int_{A_\zeta} \frac{\exp \left[ -\frac{1}{2} \left\lbrace \thetab^T(2\Wt^T \Wt + I_p)\thetab - 2 \thetab^T \vt \right\rbrace \right]}{\prod_{i=1}^n \left(1- \Phi(\wt_i^T\thetab)\right)} \\
&  \qquad \quad \left( \int_{\mathbb{R}^n_-} \exp\left[ 2 \thetab^T \Wt^T \bt  - \bt^T \Wt (\Wt^T W + I_p)^{-1} \vt \right. \right. \\
&  \qquad \qquad \qquad \qquad -\frac{1}{2} \left. \left.  \bt^T \left( I_n + \Wt (\Wt^T \Wt + I_p)^{-1} \Wt^T\right) \bt \right] \:d\bt  \right) \:d\thetab.  \numbereqn
\end{align*}

\noindent Therefore to prove (\ref{tracegen}), it is enough to show that for any ${\zeta \subseteq \{1,\cdots,n\}}$  
\begin{align}\label{I_a_cond}
I_{A_\zeta} < \infty.
\end{align}

\noindent Fix an arbitrary  $\zeta \subseteq \{1,\cdots,n\}$. Define  $a_{1i} = \wt_i^T\thetab \; \one(\wt_i^T\thetab \leq 0) = \wt_i^T\thetab \; \one(i \notin \zeta)$ and $a_{2i} = \wt_i^T\thetab \; \one(\wt_i^T\thetab > 0) = \wt_i^T\thetab \; \one(i \in \zeta)$, for $i = 1,\cdots,n$ and $\ab_j = (a_{ji})_{1 \leq i \leq n}$ for $j=1,2$. This means $\ab_1 + \ab_2 = \Wt \thetab$ and $a_{1i}a_{2i} = 0, \text{ for all } i$. In particular, $\ab_1^T\ab_2 = 0$. Then for $i \notin \zeta$
\[
\wt_i^T\thetab \leq 0 \implies \Phi(\wt_i^T\thetab) \leq \frac{1}{2} \implies \frac{1}{1-\Phi(\wt_i^T\thetab)} \leq 2
\]
and for $i \in \zeta$
\begin{align*}
 &\wt_i^T\thetab > 0\implies  \frac{2}{ (\wt_i^T\thetab) + \sqrt{4+(\wt_i^T\thetab)^2} } < \frac{1-\Phi(\wt_i^T\thetab)}{\phi(\wt_i^T\thetab)} \quad  \text{(Birnbaum \cite{birnbaum:1942})} \\
\implies  &\frac{1}{1-\Phi(\wt_i^T\thetab)} < \frac{(\wt_i^T\thetab) + \sqrt{4+(\wt_i^T\thetab)^2} }{2 \: \phi(\wt_i^T\thetab)} = \frac{q(\wt_i^T\thetab) }{ \phi(\wt_i^T\thetab)}
\end{align*}
where $q(x) = (x + \sqrt{4 +x^2})/2 $, and $\phi(\cdot)$ denote the standard normal density function. Thus, for any $i = 1, \cdots, n$,
\begin{align*}
\frac{1}{1-\Phi(\wt_i^T\thetab)} &< 2^{\one(i \notin \zeta)} \left\lbrace \frac{q(\wt_i^T\thetab) }{ \phi(\wt_i^T\thetab)} \right\rbrace ^ {\one(i \in \zeta)} \\
&\leq 2 \left\lbrace \frac{q(\wt_i^T\thetab) }{ \phi(\wt_i^T\thetab)} \right\rbrace ^ {\one(i \in \zeta)} \\
&= 2 \{q(\wt_i^T\thetab) \sqrt{2 \pi}\} ^ {\one(i \in \zeta)} \exp \left[ \frac{1}{2} (\wt_i^T\thetab)^2 \one(i \in \zeta) \right] \\
&\leq 2\left(1 + \sqrt{2\pi}\: q\left(\wt_i^T\thetab \; \one(i \in \zeta)\right) \right) \exp \left[ \frac{1}{2} (\wt_i^T\thetab)^2 \one(i \in \zeta) \right] \\
&= 2 \left(1 + \sqrt{2\pi}\: q(a_{2i})\right) \exp \left[ \frac{1}{2} a_{2i} ^2 \right] \\
&= \tilde{q}(a_{2i}) \exp \left[ \frac{1}{2} a_{2i} ^2 \right],\text{ say}
\end{align*}
where $\tilde{q}(x) = 2 \left(1 + \sqrt{2\pi} \: q(x)\right)$. Therefore,
\begin{align*} \label{milsubd}
\prod_{i=1}^n \left(\frac{1}{1- \Phi(\wt_i^T\thetab)}\right) & < \left(\prod_{i=1}^n \tilde{q}
(a_{2i}) \right) \exp \left[ \frac{1}{2} \ab_2^T \ab_2 \right] \\ 
&= \widetilde{Q}(\ab_2) \exp 
\left[ \frac{1}{2} \ab_2^T \ab_2 \right], \text{ say}, \numbereqn
\end{align*}
where $\widetilde{Q}(\ab_2) = \prod_{i=1}^n \tilde{q}(a_{2i})$.\\

\noindent We now derive an upper bound for the inner integral in (\ref{I_Azeta}). Let  $\epsilon \in (0,1)$ be arbitrary and 
\[
\vb^* = \left( I_n + \Wt (\Wt^T \Wt + I_p)^{-1} \Wt^T\right)^{-1/2}\: \Wt (\Wt^T \Wt + I_p)^{-1} \vt.
\] 
Then using the fact that $ 2 |\ab^T\bbo| \leq \ab^T\ab + \bbo^T\bbo$ with 
\[
\ab = \sqrt{\epsilon} \left( I_n + \Wt (\Wt^T \Wt + I_p)^{-1} \Wt^T\right)^{1/2} \bt 
\text{ and } 
\bbo = \vb^*/\sqrt{\epsilon}
\]
we get
\begin{align*}
& \int_{\R^n_-} \exp\left[ 2 \thetab^T \Wt^T \bt  - \bt^T \Wt (\Wt^T W + I_p)^{-1} \vt \right.   \\
& \qquad \qquad \qquad - \left. \frac{1}{2}  \: \bt^T \left( I_n + \Wt (\Wt^T \Wt + I_p)^{-1} \Wt^T\right) \bt \right] \:d\bt   \\
&\leq \int_{\R^n_-} \exp \left[ 2 \thetab^T \Wt^T \bt  +\frac{\epsilon}{2} \: \bt^T \left( I_n + \Wt (\Wt^T \Wt + I_p)^{-1} \Wt^T\right) \bt + \frac{1}{2\epsilon} \: {\vb^*}^T \vb^* \right. \\
& \qquad \qquad \qquad - \left.  \frac{1}{2} \: \bt^T \left( I_n + \Wt (\Wt^T \Wt + I_p)^{-1} \Wt^T\right) \bt \right] \:d\bt\\
&= C_1 \int_{\R^n_-} \exp \left[ 2 \thetab^T \Wt^T \bt - \frac{1}{2}(1-\epsilon) \: \bt^T \left( I_n + \Wt (\Wt^T \Wt + I_p)^{-1} \Wt^T\right) \bt \right] \:d\bt\\ 
&= C_1 \int_{\R^n_-} \exp \left[\: 2 \left( \sum_{i \in \zeta} t_i\:\wt_i^T \thetab + \sum_{i \notin \zeta} t_i\:\wt_i^T \thetab \right) \right.\\
& \qquad \qquad \qquad \qquad - \left.  \frac{1}{2} (1-\epsilon) \: \bt^T \left( I_n + \Wt (\Wt^T \Wt + I_p)^{-1} \Wt^T\right) \bt \right] \:d\bt \\ 
&\leq C_1 \int_{\R^n_-} \exp \left[\: 2  \sum_{i \notin \zeta} t_i\:\wt_i^T \thetab - \frac{1}{2} (1-\epsilon) \: \bt^T \left( I_n + \Wt (\Wt^T \Wt + I_p)^{-1} \Wt^T\right) \bt \right] \:d\bt \\
& \qquad \qquad \qquad \qquad \qquad \qquad \left(\text{since }\sum_{i \in \zeta} t_i\:\wt_i^T \thetab \leq 0 \text{ for } \bt \in \R^n_-\right) \\ 
&= C_1 \int_{\R^n_-} \exp \left[\: 2 \ab_1^T \bt - \frac{1}{2} (1-\epsilon) \: \bt^T \left( I_n + \Wt (\Wt^T \Wt + I_p)^{-1} \Wt^T\right) \bt \right] \:d\bt\\ 
&= C_1 \times  \exp \left[ \frac{1}{2} \left(\frac{4}{1-\epsilon}\right) \ab_1^T  \left( I_n + \Wt (\Wt^T \Wt + I_p)^{-1} \Wt^T\right)^{-1} \ab_1 \right] \times (C_1')^{-1} \\
& \qquad \times \int_{\R^n_-} C_1' \exp \left[ - \frac{1}{2} (1-\epsilon) \right. \\
& \qquad \qquad \qquad \qquad \qquad \times \left. (\bt - \ab_1^*)^T \left( I_n + \Wt (\Wt^T \Wt + I_p)^{-1} \Wt^T\right) (\bt - \ab_1^*) \right] \:d\bt\\
& \qquad \qquad \qquad \qquad \left(\text{where } \ab_1^* = \left(\frac{2}{1-\epsilon}\right)  \left( I_n + \Wt (\Wt^T \Wt + I_p)^{-1} \Wt^T\right)^{-1} \ab_1\right)\\
&\leq C_1'' \: \exp \left[ \frac{1}{2} \left(\frac{4}{1-\epsilon}\right) \ab_1^T  \left( I_n + \Wt (\Wt^T \Wt + I_p)^{-1} \Wt^T\right)^{-1} \ab_1 \right] \numbereqn \label{innrint}
\end{align*}
where \begin{align*}
C_1 &= \exp\left(\frac{{\vb^*}^T \vb^*}{2\epsilon}\right) \\
C_1' &= (2\pi)^{-n/2} (1-\epsilon)^{n/2} \left\lbrace\det \left( I_n + \Wt (\Wt^T \Wt + I_p)^{-1} \Wt^T\right) \right \rbrace ^{1/2} \\
\text{ and } C_1'' &= C_1/C_1'.
\end{align*} 
The last inequality follows from the fact the integrand is a normal density. \\

\noindent Therefore, from $(\ref{I_Azeta})$ , $(\ref{milsubd})$ and $(\ref{innrint})$ we get
\begin{align*}\label{I_A_ubd}
I_{A_\zeta} &\leq C_1''  \int_{A_\zeta} \exp \left[ -\frac{1}{2} \left\lbrace \thetab^T(2\Wt^T \Wt + I_p)\thetab - 2 \thetab^T \vt \right\rbrace \right] \widetilde{Q}(\ab_2) \exp \left[ \frac{1}{2} \ab_2^T \ab_2 \right] \\
&\qquad\qquad \times \exp \left[ \frac{1}{2} \left(\frac{4}{1-\epsilon}\right)  \right. \\ 
&\qquad \qquad \qquad \qquad \times  \left. \ab_1^T  \left( I_n + \Wt (\Wt^T \Wt + I_p)^{-1} \Wt^T\right)^{-1} \ab_1 \right]  \:d\thetab \\
&= C_1''  \int_{A_\zeta} \widetilde{Q}(\ab_2) \\
&\qquad \qquad \times \exp \left[ -\frac{1}{2} \left\lbrace \thetab^T(2\Wt^T \Wt + I_p)\thetab - \ab_2^T\ab_2  - 2 \thetab^T \vt \right. \right. \\
&\qquad \qquad \qquad \left.\left. - \left(\frac{4}{1-\epsilon}\right) \ab_1^T  \left( I_n + \Wt (2\Wt^T \Wt + I_p)^{-1} \Wt^T\right)^{-1} \ab_1  \right\rbrace \right]  \:d\thetab \\
&=C_1''  \int_{A_\zeta} \widetilde{Q}(\ab_2) \: \exp \left[ -\frac{1}{2} \left\lbrace G(\thetab,\epsilon) -  2 \thetab^T \vt \right\rbrace \right] \:d\thetab \numbereqn
\end{align*}
where
\begin{align*}\label{g_theta}
G(\thetab,\epsilon) &= \thetab^T(2\Wt^T \Wt + I_p)\thetab - \ab_2^T\ab_2 \\ 
& \qquad - \left(\frac{4}{1-\epsilon}\right) \ab_1^T  \left( I_n + \Wt (\Wt^T \Wt + I_p)^{-1} \Wt^T\right)^{-1} \ab_1 \\
&= 2(\ab_1 + \ab_2)^T(\ab_1 + \ab_2) + \thetab^T\thetab - \ab_2^T\ab_2 \\
& \qquad - \left(\frac{4}{1-\epsilon}\right) \ab_1^T  \left( I_n + \Wt (\Wt^T \Wt + I_p)^{-1} \Wt^T\right)^{-1} \ab_1 \\
&= 2\ab_1^T\ab_1 - \left(\frac{4}{1-\epsilon}\right) \ab_1^T  \left( I_n + \Wt (\Wt^T \Wt + I_p)^{-1} \Wt^T\right)^{-1} \ab_1  \\
& \qquad + \ab_2^T\ab_2 + \thetab^T\thetab \numbereqn
\end{align*}
the last equality following from the fact that $\ab_1^T\ab_2 = 0$. Therefore, to prove 
$(\ref{I_a_cond})$ it would be sufficient to show that for some $\epsilon \in (0,1)$
\begin{align}\label{integrand}
\widetilde{Q}(\ab_2) \: \exp \left[ -\frac{1}{2} \left\lbrace G(\thetab,\epsilon) -  2 \thetab^T \vt \right\rbrace \right]
\end{align}
is integrable on $A_\zeta$. This holds when $G(\thetab,\epsilon)$  is a positive definite quadratic form in $\thetab$ on $A_\zeta$, as then, for some sufficiently large 
$C_2 > 0$, $G(\thetab,\epsilon) - 2 \thetab^T \vt + C_2$ is also positive definite, making the exponential term in $(\ref{integrand})$ a constant multiple of an appropriate 
multivariate normal density and the integrability of $(\ref{integrand})$ follows from the existence of (positive) moments of any multivariate normal distribution. 

Therefore, our objective is to show that there exists an $\epsilon \in (0,1)$ for which 
$G(\thetab, \epsilon)$ is a positive definite quadratic form (on $A_\zeta$) in $\thetab$ 
when at least one of \ref{trcond1} and \ref{trcond2} holds. Note that on $A_\zeta$, 
each entry of $\ab_1$ and $\ab_2$ is a linear function of $\thetab$. It follows from 
(\ref{g_theta}) that on $A_\zeta$, $G(\thetab, \epsilon)$ is a quadratic form in 
$\thetab$ for every $\epsilon > 0$. Since $\zeta$ is arbitrarily chosen, to achieve our 
objective, it is enough to show that for some $\epsilon \in (0,1)$, $G(\thetab, \epsilon) 
> 0$ for every $\thetab \in \R^p$. Since $X$ (and hence $\Wt$) is assumed to have 
full column rank if $n \geq p$ and full row rank if $n < p$, it follows that either $\Wt^T 
\Wt$ (when $n \geq p$) or $\Wt \Wt^T$ (when $n < p$) is invertible. Therefore, when 
$n \geq p$
\[
\thetab^T\thetab = \thetab^T \Wt^T\Wt (\Wt^T\Wt)^{-2} \Wt^T\Wt \thetab = (\ab_1 + \ab_2)^T \Wt (\Wt^T\Wt)^{-2} \Wt^T (\ab_1 + \ab_2)  
\]
and when $n < p$
\[
\thetab^T\thetab \geq \thetab^T \proj_{W^T} \thetab = \thetab^T \Wt^T (\Wt\Wt^T)^{-1} \Wt\thetab = (\ab_1 + \ab_2)^T (\Wt\Wt^T)^{-1} (\ab_1 + \ab_2)
\]
where for any matrix $B$, $\proj_B$ denotes the orthogonal projection (matrix) onto the column space of $B$, and the  inequality follows from the fact that  $\xb^T\xb \geq \xb^T \proj_B \xb$, for any $\xb \in \R^{k}$, $k$ being the number of rows of $B$. Thus, it follows that letting 
\[
M =
\begin{cases}
\Wt (\Wt^T\Wt)^{-2} \Wt^T & \text{if } n \geq p \\
 (\Wt\Wt^T)^{-1} & \text{if } n < p
\end{cases}
\]
yields, in general,
\[
\thetab^T\thetab \geq  (\ab_1 + \ab_2)^T M (\ab_1 + \ab_2). 
\]

\noindent Hence, from (\ref{g_theta}) 
\begin{align*} \label{H(a)defn}
G(\thetab, \epsilon)  & \geq 2\ab_1^T\ab_1 - \left(\frac{4}{1-\epsilon}\right) \ab_1^T  \left( I_n + \Wt (\Wt^T \Wt + I_p)^{-1} \Wt^T\right)^{-1} \ab_1 \\
& \qquad + \ab_2^T\ab_2 + (\ab_1 + \ab_2)^T M (\ab_1 + \ab_2)\\
&=  2\ab_1^T\ab_1 - 4 \ab_1^T  \left( I_n + \Wt (\Wt^T \Wt + I_p)^{-1} \Wt^T\right)^{-1} \ab_1 \\
& \qquad + \ab_2^T\ab_2 + (\ab_1 + \ab_2)^T M (\ab_1 + \ab_2)\\
&\qquad - 4 \left(\frac{\epsilon}{1-\epsilon}\right) \ab_1^T  \left( I_n + \Wt (\Wt^T \Wt + I_p)^{-1} \Wt^T\right)^{-1} \ab_1 \\
&=: H(\ab,\epsilon)\numbereqn 
\end{align*}
where $\ab^T = (\ab_1^T,\ab_2^T)$. From (\ref{H(a)defn}) it follows that  in order to prove $G(\thetab, \epsilon) > 0$ for all $\thetab \in \R^p$ it is enough to show that $H(\ab,\epsilon) > 0$ for all $\ab \in \R^n$.\\

\noindent Now letting
\[
\underset{n \times n}{R} = 
\begin{pmatrix}
2I_n + M - 4 \left( I_n + \Wt (\Wt^T \Wt + I_p)^{-1} \Wt^T\right)^{-1} & M\\
M & I_n + M
\end{pmatrix}
\]
and
\[
\underset{n \times n}{S} = 
\begin{pmatrix}
\left( I_n + \Wt (\Wt^T \Wt + I_p)^{-1} \Wt^T\right)^{-1} & \underset{n \times n}{0} \\
\underset{n \times n}{0} & \underset{n \times n}{0}
\end{pmatrix}
\]
yields
\begin{align*}
H(\ab,\epsilon) = \ab^T R \ab -  4 \left(\frac{\epsilon}{1-\epsilon}\right) \ab^T S \ab
\end{align*}
where $0_{n \times n}$ denotes an $n \times n$ matrix with all elements equal to zero.\\

\noindent
Note that $S$ is positive semi-definite. Hence, if $H^*(\ab) := \ab^T R \ab =  H(\ab,0)$ is positive definite, then it follows by Proposition~\ref{prop3} that $H(\ab, \epsilon)$ is 
positive definite in $\ab$ for sufficiently small $\epsilon$. Thus, our objective boils down  in showing that when at least one of \ref{trcond1} and \ref{trcond2} holds, 
$H^*(\ab)$ is positive definite in $\ab$. We shall prove this fact by considering the cases $n \geq p$ and $n < p$ separately.

\subsection*{Case I : $n\geq p$}

Consider the following singular value decomposition.
\begin{align}
\underset{n \times p}{\Wt} = \underset{ n \times p}{ U} \; \underset{ p \times p}{ D} \; {\underset{ p \times p}{ V}}^T
\end{align}
where $V \in \R^{p \times p}$ is orthogonal, $D \in \R^{p \times p}$ is diagonal, say $D = \diag(d_1, \cdots, d_p)$ with  $d_i \neq 0$ for all $i = 1,\cdots,n$, and $U \in \R^{n \times p}$ is a matrix with orthogonal columns. Further, let 
\[
\underset{n \times n}{U^*} = \left( \underset{n \times p}{U} \left| \underset{n \times (n - p)}{\Ut} \right. \right) \
\]
be orthogonal in $\R^{n \times n}$.\\

\noindent Then,
\begin{align*}
\left( I_n + \Wt (\Wt^T \Wt + I_p)^{-1} \Wt^T\right)^{-1} &=  U \left(\frac{D^2 + \tau I_p}{2D^2 + \tau I_p}\right) U^T + \Ut\Ut^T 
\end{align*}
and
\[
M = \Wt (\Wt^T\Wt)^{-2} \Wt^T = U D^{-2} U^T = U \left(\frac{ I_p}{D^2 }\right)U^T
\]
where, for diagonal matrices 
\[
\tilde{N}_{k \times k} = \diag(\tilde{n}_1, \cdots,\tilde{n}_k) 
\text{ and } 
N_{k \times k} =\diag(n_1, \cdots, n_k) 
\]
with $n_i \neq 0$ for all $i = 1,\cdots,k$, we define
\[
\frac{\tilde{N}}{N} := \diag\left( \frac{\tilde{n}_1}{n_1}, \cdots, \frac{\tilde{n}_k}{n_k} \right).
\]

\noindent Then,
\[
\ab_1^T \ab_2  = \ab_1^T(UU^T + \Ut\Ut^T)\ab_2 = 0 \implies \ab_1^TUU^T\ab_2 = - \ab_1^T\Ut\Ut^T\ab_2
\]
and
\[
\Ut^T(\ab_1+\ab_2) = \Ut^T \Wt \thetab = \Ut^T U D V^T \thetab = \bm{0} \implies \Ut^T \ab_1 = - \Ut^T \ab_2
\]
which means
\[
\ab_1UU^T\ab_2 = - \ab_1 \Ut \Ut^T\ab_2 = \ab_1 \Ut \Ut^T\ab_1 = \ab_2 \Ut \Ut^T\ab_2  
\]
and hence
\begin{align*} \label{H*1}
 H^*(\ab) &= 2\ab_1^T\ab_1 - 4 \ab_1^T  \left( I_n + \Wt (\Wt^T \Wt + I_p)^{-1} \Wt^T\right)^{-1} \ab_1 + \ab_2^T\ab_2 \\
 & \qquad + (\ab_1 + \ab_2)^T M (\ab_1 + \ab_2)\\
 &= 2\ab_1^T U U^T \ab_1 + 2\ab_1^T \Ut \Ut^T \ab_1 - 4 \ab_1^T U \left(\frac{D^2 + \tau I_p}{2D^2 + \tau I_p}\right) U^T \ab_1  \\
 &\qquad - 4 \ab_1^T \Ut\Ut^T \ab_1 + \ab_2^T U U^T \ab_2 + \ab_2^T \Ut \Ut^T \ab_2 \\
 &\qquad + \ab_1^T  U \left(\frac{ I_p}{D^2 }\right)U^T \ab_1 + \ab_2^T  U \left(\frac{ I_p}{D^2 }\right)U^T \ab_2 + 2 \ab_1^T  U \left(\frac{ I_p}{D^2 }\right)U^T \ab_2\\
 &= 2\ab_1^T U U^T \ab_1 + 2\ab_1UU^T\ab_2 - 4 \ab_1^T U \left(\frac{D^2 + \tau I_p}{2D^2 + \tau I_p}\right) U^T \ab_1  \\
 &\qquad - 4 \ab_1UU^T\ab_2 + \ab_2^T U U^T \ab_2 + \ab_1UU^T\ab_2 \\
 &\qquad + \ab_1^T  U \left(\frac{ I_p}{D^2 }\right)U^T \ab_1 + \ab_2^T  U \left(\frac{ I_p}{D^2 }\right)U^T \ab_2 + 2 \ab_1^T  U \left(\frac{ I_p}{D^2 }\right)U^T \ab_2\\
  &= \ab_1^T U \left(2I_p + \frac{ I_p}{D^2 } - \frac{4 D^2 + 4 I_p}{2D^2 + \tau I_p} \right) U^T \ab_1 + \ab_2^TU \left(I_p + \frac{ I_p}{D^2 } \right) U^T \ab_2 \\
  &\qquad +  2 \ab_1^T U \left(\frac{ I_p}{D^2 } - \frac12  I_p \right) U^T \ab_2\\
  &= \at_1^T  \left(2I_p + \frac{ I_p}{D^2 } - \frac{4 D^2 + 4 I_p}{2D^2 + \tau I_p} \right) \at_1 + \at_2^T \left(I_p + \frac{ I_p}{D^2 } \right) \at_2 \\
  &\qquad + 2 \at_1^T \left(\frac{ I_p}{D^2 } - \frac12  I_p \right) \at_2 \numbereqn
\end{align*}
where $\at_j = U^T \ab_j$ for $j = 1,2$.\\

\noindent Note that, 
\begin{align}\label{A11}
2I_p + \frac{ I_p}{D^2 } - \frac{4 D^2 + 4 I_p}{2D^2 + \tau I_p} = \frac{ I_p}{D^2 } - \frac{2 I_p}{2D^2 + \tau I_p} = \frac{ I_p}{D^2(2D^2 + \tau I_p)} > 0_{p \times p}  
\end{align}
and
\begin{align}\label{A22}
I_p + \frac{ I_p}{D^2 } > 0_{p \times p}
\end{align}
where for two symmetric matrices $A$ and $B$ of the same order, $A>B$ means $A-B$ is positive definite. This shows that the first two terms in (\ref{H*1}) are strictly positive. \\

\noindent Now, under \ref{trcond2},
\begin{align*}
& X Q^{-1/2} \text{ is rectangular diagonal} \\
\implies & \Wt = W Q^{-1/2} \text{ is rectangular diagonal} \\
\implies & U^* = I_n, V = I_p \\
\implies & \at_1 ^T D^* \at_2 \; = \ab_1^TUD^*U^T\ab_2 + \ab_1^T\Ut D^* \Ut ^T\ab_2 \\
& \qquad \qquad =\ab_1^T
\begin{pmatrix}
{D^*}_{11}^{p \times p} & 0^{p \times (n-p)} \\
0^{(n-p) \times p} & 0^{(n-p) \times (n-p)}  
\end{pmatrix}
\ab_2 \\
& \qquad \qquad \quad + \ab_1^T
\begin{pmatrix}
0^{p \times p} & 0^{p \times (n-p)} \\
0^{(n-p) \times p} & {D^*}_{22}^{(n-p) \times (n-p)}  
\end{pmatrix}
\ab_2\\
& \qquad \qquad = 0, 
\end{align*}
for any diagonal matrix 
\[
D^*_{n \times n} = 
\begin{pmatrix}
D^*_{11} & 0\\
0 & {D^*}_{22}  
\end{pmatrix}.
\] 
The last equality follows from the fact that $a_{1i} a_{2i} = 0$ for all $i=1,\cdots,n$. Therefore letting $D^* = \frac{ I_p}{D^2 } - \frac12  I_p$ makes the cross product term in (\ref{H*1}) equal to zero, which means, under \ref{trcond2}, $H^*(\ab)$ is a sum of two positive quantities, and hence is strictly positive.\\

\noindent Again, note that $Q^{-1/2}X^TX Q^{-1/2} = Q^{-1/2}W^TW Q^{-1/2} = \Wt^T\Wt =  V D^2 V^T$. Hence, the eigenvalues of $Q^{-1/2}X^TX Q^{-1/2}$ are $d_1^2, \dots, d_p^2$. Let $k_i = d_i^2/2$ for all $i$. Therefore, under \ref{trcond1}, for all $i = 1,\cdots,p$,
\begin{align*}
& d_i^2 < \frac{7}{2} \implies k_i < \frac{7}{4} \implies 7k_i^2 > 4k_i^3 \\
\implies & 1 + 2k_i + 7k_i^2 > 1 + 2k_i + 4k_i^3\\
\implies & 1 + 2k_i >  1 +2k_i -7k_i^2 + 4k_i^3 = (1-k_i)^2 (1+4k_i) \\
\implies & 1 +d_i^2 > \left(1 - \frac{d_i^2 }{2}\right)^2  (2d_i^2 + 1) \\
\implies & 1 + \frac{ 1}{d_i^2} > \frac{ 1}{d_i^4 }\left(1 - \frac{d_i^2 }{2}\right)^2 d_i^2 \: (2d_i^2 + 1) 
= \left(\frac{ 1}{d_i^2 } - \frac12   \right)^2 d_i^2 \:(2d_i^2 + 1). 
\end{align*}

\noindent This implies,
\begin{align*}\label{D1}
\left(I_p + \frac{ I_p}{D^2 } \right) &> \left(\frac{ I_p}{D^2 } - \frac12  I_p \right)^2 D^2(2D^2 + \tau I_p) \\
&= \left(\frac{ I_p}{D^2 } - \frac12  I_p \right) D^2(2D^2 + \tau I_p) \left(\frac{ I_p}{D^2 } - \frac12  I_p \right) \\
&= \left(\frac{ I_p}{D^2 } - \frac12  I_p \right) \left(2I_p + \frac{ I_p}{D^2 } - \frac{4 D^2 + 4 I_p}{2D^2 + \tau I_p} \right)^{-1} \left(\frac{ I_p}{D^2 } - \frac12  I_p \right) \numbereqn
\end{align*}
Combining (\ref{H*1}), (\ref{A11}), (\ref{A22}) and (\ref{D1}), it follows that $H^*(\ab)$ is positive definite.

\subsection*{Case II : $n < p$}
We slightly abuse our notation by considering the following singular value decomposition:
\begin{align}
\underset{n \times p}{\Wt^T} = \underset{ p \times n}{ U} \; \underset{ n \times n}{ D} \; {\underset{ n \times n}{ V}}^T
\end{align}
where as before (but now with different dimensions) $V \in \R^{n \times n}$ is orthogonal, $D \in \R^{n \times n}$ is diagonal, say $D = \diag(d_1, \cdots, d_n)$ where no $d_i$ is equal to zero, and $U \in \R^{p \times n}$ is a matrix with orthogonal columns and
\[
\underset{p \times p}{U^*} = \left( \underset{p \times n}{U} \left| \underset{p \times (p - n)}{\Ut} \right. \right) 
\]
is orthogonal in $\R^{p \times p}$. Here
\begin{align*}
\left( I_n + \Wt (\Wt^T \Wt + I_p)^{-1} \Wt^T\right)^{-1} &=  V \left(\frac{D^2 + \tau I_n}{2D^2 + \tau I_n}\right) V^T 
\end{align*}
and
\[
M = \Wt^T (\Wt\Wt^T)^{-1} \Wt = V D^{-2} V^T = V \left(\frac{ I_n}{D^2 }\right) V^T.
\]

\noindent
Hence
\begin{align*} \label{H*2}
 H^*(\ab) &= 2\ab_1^T\ab_1 - 4 \ab_1^T  \left( I_n + \Wt (\Wt^T \Wt + I_p)^{-1} \Wt^T\right)^{-1} \ab_1 + \ab_2^T\ab_2 \\
 & \qquad + (\ab_1 + \ab_2)^T M (\ab_1 + \ab_2)\\
 &= 2\ab_1^T V V^T \ab_1  - 4 \ab_1^T V \left(\frac{D^2 + \tau I_n}{2D^2 + \tau I_n}\right) V^T \ab_1 + \ab_2^T V V^T \ab_2   \\
 &\qquad  + \ab_1^T  V \left(\frac{ I_n}{D^2 }\right)V^T \ab_1 + \ab_2^T  V \left(\frac{ I_n}{D^2 }\right)V^T \ab_2 + 2 \ab_1^T  V \left(\frac{ I_n}{D^2 }\right)V^T \ab_2\\
 &= \ab_1^T V \left(2I_n + \frac{ I_n}{D^2 } - \frac{4 D^2 + 4 I_n}{2D^2 + \tau I_n} \right) V^T \ab_1 + \ab_2^T V \left(I_n + \frac{ I_n}{D^2 } \right) V^T \ab_2 \\
 &\qquad + 2 \ab_1^T V \left(\frac{ I_n}{D^2 } - \frac12 I_n \right) V^T \ab_2\\
 &=\at_1^T  \left(2I_n + \frac{ I_n}{D^2 } - \frac{4 D^2 + 4 I_n}{2D^2 + \tau I_n} \right) \at_1 + \at_2^T \left(I_n + \frac{ I_n}{D^2 } \right) \at_2  \\
 &\qquad + 2 \at_1^T  \left(\frac{ I_n}{D^2 } - \frac12  I_n \right) \at_2 \numbereqn
\end{align*}
where $\at_j = V^T \ab_j$ for $j = 1,2$; and the equality in the second last line arises from the fact that $\ab_1^TVV^T\ab_2 = \ab_1^T\ab_2 = 0$. Notice the similarities between (\ref{H*2}) and (\ref{H*1}) and note that the non-zero eigenvalues of 
\[
Q^{-1/2}X^TX Q^{-1/2} = \Wt^T\Wt = U D^2 U^T \text{ and }  \Wt \Wt^T = V D^2 V^T
\] 
are the same, namely  $d_1^2,\dots,d_n^2$. Therefore by exactly similar arguments as provided in the previous case, it follows that in this case also, $H^*(\ab)$ is positive definite if either \ref{trcond1} or  \ref{trcond2}   holds.\\  

\noindent Thus, both when $n \geq p$ and $n < p$, if at least one of  \ref{trcond1}  and   \ref{trcond2} holds, $H^*(\ab)$ is  positive definite in $\ab$. As mentioned previously, this ensures integrability of $I_{A_\zeta}$ as given in (\ref{I_Azeta}). Since $\zeta \subseteq \{1,2, \cdots, n\}$ is chosen arbitrarily, it follows that $\Psi$ has the trace-class property. 
\end{proof}

\begin{remark}
Since the positive eigenvalues of the matrices $\Wt^T\Wt$ and $\Wt \Wt^T$ in the proof of Theorem~\ref{thm2} are the same, condition   \ref{trcond1}   can be equivalently expressed as the following.
\begin{enumerate}[label = (A\arabic*)]
\item  \label{trcond1dash} \emph{All eigenvalues (or all non-zero eigenvalues, if $n \geq p$) of $XQ^{-1} X^T$ are less than $7/2$.}
\end{enumerate} 
\end{remark}

\begin{remark} \label{remprior}
The prior considered in this paper reduces to an approximate flat prior when $Q$ is ``small'' (approaching the zero matrix). However, when $Q$ is ``small'', $Q^{-1}$  is 
``large''; which makes the (positive) eigenvalues of $XQ^{-1} X^T$ large. It follows that, when $Q$ is so small that at least one eigenvalue of $XQ^{-1} X^T$ is bigger 
than or equal to 7/2, condition \ref{trcond1dash} gets violated, and  Theorem~\ref{thm2} can no longer be applied.  
\end{remark}

\begin{remark}\label{gprior}
When $n \geq p$ and $X$ has full column rank, Zellner \cite{zellner:1983} specifies a 
Gaussian prior distribution for $\betab$ with the prior covariance matrix having the 
form $Q^{-1} = g(X^TX)^{-1}$, where $g$ is a positive scaling constant. This prior is 
commonly referred to as {\it Zellner's $g$-prior}. Under this prior 
\[
Q^{-1/2}X^TX Q^{-1/2} = g \left(X^TX\right)^{-1/2} X^TX \left(X^TX\right)^{-1/2} = g I_p 
\]
has eigenvalue $g$ with multiplicity $p$. Hence condition \ref{trcond1} is satisfied as long as $g < 7/2$, or equivalently $g^{-1} > 2/7$. Thus under this prior, a sufficient condition for  the AC-DA Markov chain $\Psi$  to be trace-class is $g < 7/2$.\\
\end{remark}

\section{Sandwich Algorithms} \label{sandalg}
As mentioned previously, one of the common problems with DA algorithms is that they are slow to converge.  However, significant improvements over the convergence rate of a two block DA Makrov chain can be achieved  by using a so-called \emph{sandwich algorithm}, where one simple and computationally inexpensive intermediate step  ``sandwiched'' between the two steps of the DA algorithm is added at each iteration (see e.g., Liu and Wu \cite{liu:wu:1999}; Meng and van Dyk \cite{Meng:vanDyk:1999}; Hobert and Marchev \cite{hobert:marchev:2008}).  Consider our AC-DA Markov chain $\Psi$ once again and let $\betab$ be its current state. One iteration of a sandwich algorithm corresponding to  $\Psi$  comprises of the following three (instead of two, as in the AC-DA) steps. The first step is similar to AC-DA in the sense that a (latent) random variable $\zb \sim \pi(\zb|\betab,\yb)$ is generated. Next, a Markov transition function $R$  that is  reversible with respect to $\pi(\zb|\yb)\: d\zb$ (i.e., $R(\zb, d\zb') \: \pi(\zb|\yb) \: d\zb = R(\zb', d\zb) \: \pi(\zb'|\yb) \: d\zb'$) is considered, where $\pi(\zb|\yb)$ denotes the $\zb$-marginal of $\pi(\betab,\zb|\yb)$ in (\ref{pi_betaz|y}). The intermediate second step for the sandwich algorithm then amounts to generating a random variable $\zb'$ from the measure $R(\zb,\cdot)$.  The third and final step in the sandwich algorithm is again similar to the last step in AC-DA except for the fact that here, instead of  $\zb$, $\zb'$  is used. That is, the third step in the sandwich algorithm entails generating the next state $\betab'$  from $\pi(\betab|\zb',\yb)$. The intermediate step  involving the generation of $\zb'$ from $\zb$ is typically done with the help of a low (generally one or two) dimensional random variable, making the DA and the sandwich algorithm comparable in terms of computational efficiency.

In order to make precise comparisons between the DA and the sandwich algorithms, we first need to introduce some notations. Let $\Psit$ be the Markov chain obtained by the sandwich algorithm. Analogous to the notations used in Section~\ref{geoergodicity} and \ref{trace-class}, let $\Kt$ denote the Markov operator associated with the sandwich algorithm, i.e., for all $h \in L^2_0(\pi)$, $\Kt$ maps $h$ to
\[
(\Kt h) (\betab) := \int_{\R^p} h(\betab') \: \kt(\betab,\betab') \:d\betab'
\]  
where $\kt$, the Markov transition density of $\Psit$, is defined as follows: 
\[
\kt(\betab,\betab') = \int_{\mathcal{Z}} \int_{\mathcal{Z}} \pi(\betab'|\zb',\yb) R(\zb, d\zb') \pi(\zb|\betab,\yb) \:d\zb.
\]

A sandwich algorithm is \emph{always} at least as good as the DA algorithm in the sense of having a smaller operator norm, that is, we always have $\| \Kt\| \leq \| K\|$, though a strict inequality may not hold in general. Here $K$ denotes the Markov operator associated with the corresponding DA Markov chain. Note that if a DA Markov chain is geometrically ergodic, then so is the sandwich Markov chain due to the relationship $\|K\| \leq \|K\| <1$. (Recall that a reversible Markov chain is geometrically ergodic if and only if the corresponding operator $K$ satisfies $\|K\| < 1$ (Roberts and Rosenthal \cite{roberts:rosenthal:1997}).) Thus, as long as the original DA algorithm is geometrically ergodic, a CLT holds for the sandwich algorithm as well. In particular, let $g \in L^2(\pi(\betab|\yb))$ such that $E_\pi g(\betab)^{2} < \infty$, and let $(\betab_N)_{N=1}^m$ and $(\betat_N)_{N=1}^m$ respectively denote the observations generated by the DA and the sandwich algorithm. Define $\bar{g}_m := m^{-1} \sum_{N=1}^m g\left(\betab_N\right)$ and $\widetilde{g}_m := m^{-1} \sum_{N=1}^m g\left(\betat_N\right)$. Then there exist positive, finite quantities $\sigma^2_{g}$ and $\sigmat^2_{g}$ such that, as $m \rightarrow \infty$,
\begin{align*}
\sqrt{m}(\bar{g}_m - E_\pi g) & \xrightarrow{d} N(0, \sigma^2_{g})\\
\text{and } \sqrt{m}(\widetilde{g}_m - E_\pi g) & \xrightarrow{d} N(0, \sigmat^2_{g}).
\end{align*}
Moreover Hobert and Marchev \cite[Theorem 4]{hobert:marchev:2008} show that $\sigma^2_{g} \leq \sigmat^2_{g}$, that is, by using a sandwich algorithm, one gets the asymptotic variance of $\widetilde{g}_m$ no larger (possibly smaller) than that of $\bar{g}_m$.

One class of sandwich algorithms, the so called Parameter Expanded Data Augmentation (PX-DA) algorithms (Liu and Wu \cite{liu:wu:1999}, Meng and Van Dyk \cite{Meng:vanDyk:1999}), use a proper probability measure for the Markov transition function $R$ in the intermediate step. While all PX-DA algorithms are aimed at improving the original DA algorithm, following Hobert and Marchev \cite{hobert:marchev:2008}, one can get a sandwich algorithm that is uniformly better than all PX-DA algorithms, as long as a certain group structure is present in the problem. This ``best'' PX-DA algorithm, while technically not a PX-DA itself as it does not use a proper probability measure for $R$, and rather involves Haar measure, is called the Haar PX-DA algorithm. We now describe the form of the Haar PX-DA algorithm  corresponding to the AC-DA algorithm.

Using the similar notations as in Hobert and Marchev \cite{hobert:marchev:2008}, let $\G$ be the multiplicative group  $(\R_+, \circ)$ where the group composition $\circ$ is defined as multiplication, i.e., for all $g_1, g_2 \in \G$,   $g_1 \circ g_2 = g_1 g_2$. $\G$ has $e = 1$ as its identity element and $g^{-1} = 1/g$. The multiplicative group  $(\R_+, \circ)$ is uni-modular with Haar measure $\mu_l(dg) = dg/g$, $dg$ being the usual Lebesgue measure on $\R_+$. Recall that $\Z$ denotes the support of the conditional density $\pi(\zb|\yb)$ of $\zb$ given $\yb$. (In particular, $\Z$ is the Cartesian product of $n$ half lines $\R_+$ or $\R_-$ according as $y_i = 1$ or $0$.) Let us define a (left) group action of $\G$ on $\Z$,  which act through component-wise multiplication, i.e., $g\in \G, \zb = (z_1,\cdots,z_n)^T \in \Z \implies g\zb = (gz_1, \cdots, gz_n)^T$. With this (left) group action, the Lebesgue measure on $\R^n$ is relatively left invariant with multiplier $\chi(g) = g^n$; i.e., for all $g \in \G$ and all integrable functions $h:\R^n \rightarrow \R$,
\[
g^n \int_{\R^n} h(g\zb)\:d\zb = \int_{\R^n} h(\zb)\:d\zb.
\]   
Then the intermediate step (that involves drawing $\zb'$ from $\zb$ using some Markov transition function $R$) of the Haar PX-DA algorithm amounts to generating a random variable $g$ from a density proportional to
\[
\chi(g) \: \pi(g\zb|\yb) \: \mu_l(dg) = g^{n-1} \:  \pi(g\zb|\yb) \: dg =:w(g) \: dg
\]
and defining $\zb' = g \zb = (gz_1,\cdots,gz_n)^T$. Straightforward calculations show that the $\zb$-marginal of the joint density in (\ref{pi_betaz|y}) satisfies 
\begin{align*}\label{pi_z|y}
\pi(\zb|\yb)  &\propto \prod_{i=1}^n \left \lbrace \left(\one_{(0,\infty)}(z_i)\right) ^{y_i}
												 \left(\one_{(-\infty,0]}(z_i) \right) ^{1-y_i} \right\rbrace \\
&\quad	\times \exp \left[ -\frac{1}{2} \left\lbrace \zb^T\left( I_n - X(X^TX+Q)^{-1}X^T\right)\zb \right. \right. \\
& \qquad \qquad - \left. \left. 2 \zb^T X(X^TX+Q)^{-1} \vb \right\rbrace \right] \numbereqn			  
\end{align*}
so that 
\begin{align*}
\pi(g\zb|\yb)  &\propto \prod_{i=1}^n \left \lbrace \left(\one_{(0,\infty)}(g z_i)\right) ^{y_i}
										 \left(\one_{(-\infty,0]}(g z_i) \right) ^{1-y_i} \right\rbrace  \\
&\quad	\times \exp \left[ -\frac{1}{2} \left\lbrace g^2 \zb^T\left( I_n - X(X^TX+Q)^{-1}X^T\right)\zb 
 \right. \right. \\
& \qquad \qquad  \qquad \quad - \left. \left. 2 g \zb^T X(X^TX+Q)^{-1} \vb \right\rbrace \right] \\
&=\prod_{i=1}^n \left \lbrace \left(\one_{(0,\infty)}(z_i)\right) ^{y_i} \left(\one_{(-\infty,0]}(z_i) \right) ^{1-y_i} \right\rbrace \\
&\qquad \times \exp \left[ -\frac{1}{2} \left\lbrace g^2 A (\zb) - 2 g B(\zb) \right\rbrace \right]
\end{align*}
where
\begin{align}\label{AzBz}
\begin{cases}
A (\zb) &= \quad  \zb^T\left( I_n - X(X^TX+Q)^{-1}X^T\right)\zb \\
B(\zb) &= \quad \zb^T X(X^TX+Q)^{-1} \vb.
\end{cases}
\end{align}

\noindent Hence, $w(g) \: dg$ in the intermediate step for the Haar PX-DA algorithm satisfies
\begin{align*}\label{w*g}
w(g) \: dg & \propto g^{n-1} \prod_{i=1}^n \left \lbrace \left(\one_{(0,\infty)}(z_i)\right) ^{y_i} \left(\one_{(-\infty,0]}(z_i) \right) ^{1-y_i} \right\rbrace \\
&\qquad \times \exp \left[ -\frac{1}{2} \left\lbrace g^2 A (\zb) - 2 g B(\zb) \right\rbrace \right] dg\\
&\propto  g^{n-1} \exp \left[ -\frac{1}{2} \left\lbrace g^2 A (\zb)  2 g B(\zb) \right\rbrace \right] dg
=: w^*(g) \: dg. \numbereqn
\end{align*}
Note that
\[
 I_n - X(X^TX+Q)^{-1}X^T = I_n - \Xt(\Xt^T\Xt + I_p)^{-1}\Xt^T
\]
where  $\Xt = X Q^{-1/2}$, and it follows from Proposition~\ref{prop2} in Appendix~\ref{appB} that the right hand side is positive definite. This implies $A(\zb)$ is strictly positive for any non zero $\zb$, and hence we can indeed find a density $\tilde{w}^*(g) = w^*(g)/\int_0^\infty w^*(s)\:ds$ proportional to $w^*(g)$. Therefore, using (\ref{w*g}), transition  from $\betab_m$ to $\betab_{m+1}$ of the Haar PX-DA  Markov chain $\Psi_H$  is obtained as follows.\\

\noindent
\begin{minipage}{\textwidth}

\hlinehere

$(m+1)$st iteration for the Haar PX-DA Markov chain $\Psi_H$

\hlinehere

\begin{enumerate}[label = (\roman*)]
\item Draw independent $z_1, \cdots, z_n$ with 
\[
z_i \sim \TN \left( \xb_i^T \betab_m , 1 , y_i \right),\; i = 1,\cdots,n
\]
and call  $\zb = (z_1,\cdots,z_n)^T$.

\item Draw $g$ from a density proportional to
\[
w^*(g) \: dg =  g^{n-1} \exp \left[ -\frac{1}{2} \left\lbrace g^2 A (\zb) - 2 g B(\zb) \right\rbrace \right] dg 
\]
where $A(\zb)$ and $B(\zb)$ are as given in (\ref{AzBz}) and call $\zb' = g \zb = (gz_1,\cdots,gz_n)^T$.

\item Draw $\betab_{m+1} \sim \N_p \left( \left(X^TX + Q\right)^{-1}\left(\vb + X^T\zb' \right), \left(X^TX + Q\right)^{-1} \right)$.
\end{enumerate}

\hlinehere \\

\end{minipage}

It can be easily seen that the conditional posterior density $\pi(\betab|\zb,\yb)$ is not invariant under the group action of $\G$ on $\Z$, i.e., $\pi(\betab|\zb,\yb) = \pi(\betab|g\zb,\yb)$ does not hold in general for $g \in \G$ (except, of course, the identity element). When the AC-DA Markov chain $\Psi$ is trace-class, using the results in Khare and Hobert \cite{khare:hobert:2011}, it follows that the Haar PX-DA chain $\Psi_H$ is also trace-class. Furthermore $\Psi_H$ is strictly better than $\Psi$ in the following sense. Let $(\lambda_i)_{i=0}^\infty$ and $({\lambda_H}_i)_{i=0}^\infty$ denote the non-increasing sequences of eigenvalues corresponding to $\Psi$ and $\Psi_H$ respectively. Then ${\lambda_H}_i \leq {\lambda}_i$ for every $i \geq 0$, with at least one strict inequality.

\begin{remark}\label{B=0}
Note that when $B(\zb)$ in (\ref{AzBz}) is zero (which is the case when the prior mean $Q^{-1}\vb$ is $\bm0 \iff \vb = \bm0$), $\tilde{w}^*$ reduces to  a square gamma density, i.e., the density corresponding to a random variable whose square follows a gamma distribution. Since generating observations from univariate gamma distributions is simple, inexpensive and implemented in virtually every statistical package, when $B(\zb) = 0$, additional costs due to the extra steps in $\Psi_H$ become essentially negligible. When $B(\zb) \neq 0$, $\tilde{w}^*(g)$ no longer remains a square gamma density (or any standard density, for that matter). However, it is still possible to generate observations  from $\tilde{w}^*(g)$ by rejection sampling, without imposing huge additional costs, since $g$ is univariate. One such method is laid out in Appendix~\ref{appA}.
\end{remark}

\section{Illustration} \label{appl}

\noindent
In this section, we consider a  real dataset to illustrate the improvements that can be  achieved by the Haar PX-DA algorithm  over the AC-DA algorithm. For this purpose,  we use the Lupus dataset from Table 1 in Van Dyk and Meng  \cite{vanDyk:Meng:2001}. This dataset consists of triplets $(y_i,x_{i1},x_{i2})$,  $i = 1,\dots,55$,  where $x_{i1}$ and $x_{i2}$ are covariates indicating levels of certain antibodies and $y_i$ is an indicator for the presence of latent membranous lupus nepthritis with 1 for presence and 0 for absence for the $i$th individual. (The dataset is also included in the \texttt{R} \cite{R}  package \texttt{TruncatedNormal} by Botev \cite{TruncatedNormal}.) Note that $\betab$ has 
$p=3$ components, including one intercept term. For our analysis, we considered a  $g$-prior with $g = 3.499999$ and prior mean 0, which ensures that the AC-DA Markov chain is trace-class. (See Remark \ref{gprior}.) Note that because the prior mean is assumed to be zero, $B(\zb)$ in (\ref{AzBz}) is also 0.  This makes the extra steps in the Haar PX-DA algorithm highly economical (see Remark \ref{B=0}).  The initial value for $\betab$ was taken to be $(-1.778,4.374,2.428)^T$, which is the maximum likelihood estimate. To facilitate comparison, along with the two $g$-prior based algorithms, we also consider the AC-DA and Haar PX-DA algorithms based on the improper flat prior from Roy and Hobert \cite{Roy:Hobert:2007}. All computations were done in \texttt{R} \cite{R} and the packages \texttt{truncnorm} \cite{truncnorm} and \texttt{TruncatedNormal} \cite{TruncatedNormal} were used.

The AC-DA algorithm is known to be extremely slow for the Lupus data (see e.g. 
Roy and Hobert \cite{Roy:Hobert:2007}; 
Pal, Khare and Hobert \cite{Pal:Khare:Hobert:2013}). Hence, all the four algorithms 
(AC-DA and Haar PX-DA, each with proper and improper priors)  were run with a 
burn-in period of $2 \times 10^6$ iterations. The next $10^6$ iterations were used to 
obtain the auto-correlations and running means for the two (non-intercept) regression 
coefficients $\beta_1$ and $\beta_2$ for all four chains. These autocorrelations and 
running means provide a natural way of evaluating 
convergence/efficiency of the DA and Haar PX-DA Markov chains.

We first compare the relative performances of all four chains together. 
Figure~\ref{acfplot1} and \ref{acfplot2} exhibit the auto-correlations  and 
Figure~\ref{runmeanplot1} and \ref{runmeanplot2}, the running  means,  for  $\beta_1$ 
and $\beta_2$ respectively. Observe the remarkably smaller autocorrelations for the 
chains based on the proper prior compared to those based on the improper flat prior 
shown in Figure~\ref{acfplot1} and \ref{acfplot2}. For instance, for $\beta_1$ in 
Figure~\ref{acfplot1}, note that all autocorrelations are less than 0.5 for the two proper 
prior chains while it takes 17 lags for the improper Haar PX-DA chain to achieve such 
an autocorrelation (and the improper AC-DA chain never reaches that value in the first 
fifty lags). The autocorrelation plots for $\beta_2$ show similar patterns in 
Figure~\ref{acfplot2}. In both Figure~\ref{acfplot1} and \ref{acfplot2}, the 
autocorrelations for the AC-DA proper prior chain almost coincide with those for the 
Haar PX-DA proper prior chain. Again, observe the noticeably better performances in 
terms of stability of running means for the chains based on the proper prior in 
Figure \ref{runmeanplot1} and \ref{runmeanplot2}. In the scales used in those two plots, 
the proper prior chains appear almost as coincidental horizontal straight lines. In 
contrast, on the same scales, the improper Haar PX-DA chain shows moderate, and the 
improper AC-DA chain, significant, fluctuations till 300,000 and 700,000 iterations 
respectively for both $\beta_1$ and $\beta_2$.\footnote{An interesting feature 
displayed in Figure~\ref{runmeanplot1} and \ref{runmeanplot2} is the significant 
differences between the estimated values of the parameters obtained from the proper 
prior chains and improper prior chains. These differences result from the fact that the 
proper prior and the improper prior lead to different posterior distributions.} These 
metrics indicate the noteworthy superiority (in terms of efficiency as well as 
convergence) of the chains based on the proper prior over those based on the 
improper flat prior in the current setting. 
\begin{figure}[!htpb]
	\centering
	\hspace*{\fill}%
	\subcaptionbox{Autocorrelation plots for $\beta_1$ values \label{acfplot1}}%
	[0.485 \linewidth]{\includegraphics[height=2.25in]{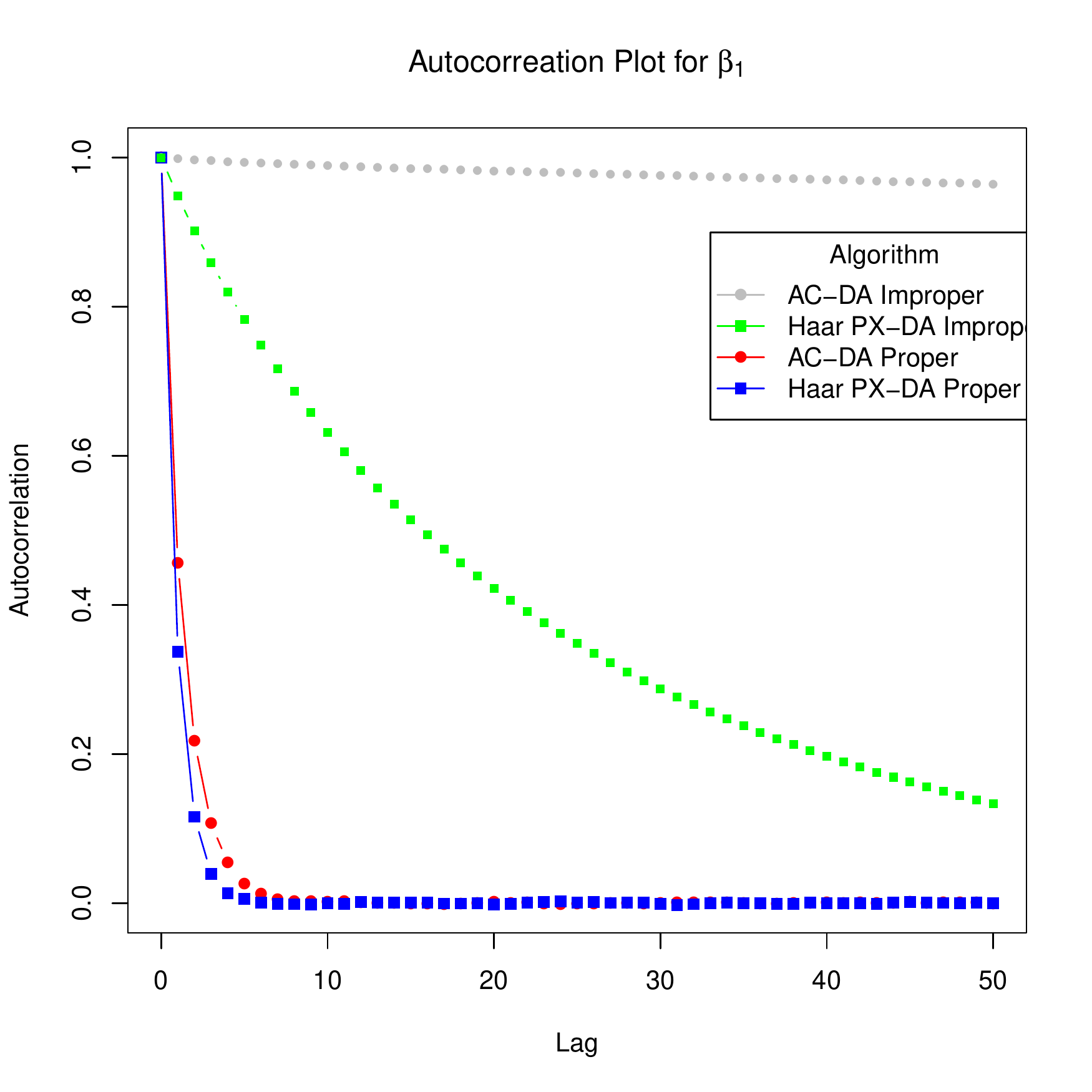}} 
	\hfill 
	\subcaptionbox{Running mean plots for $\beta_1$ values \label{runmeanplot1}}
	[.485\linewidth]{\includegraphics[height=2.25in]{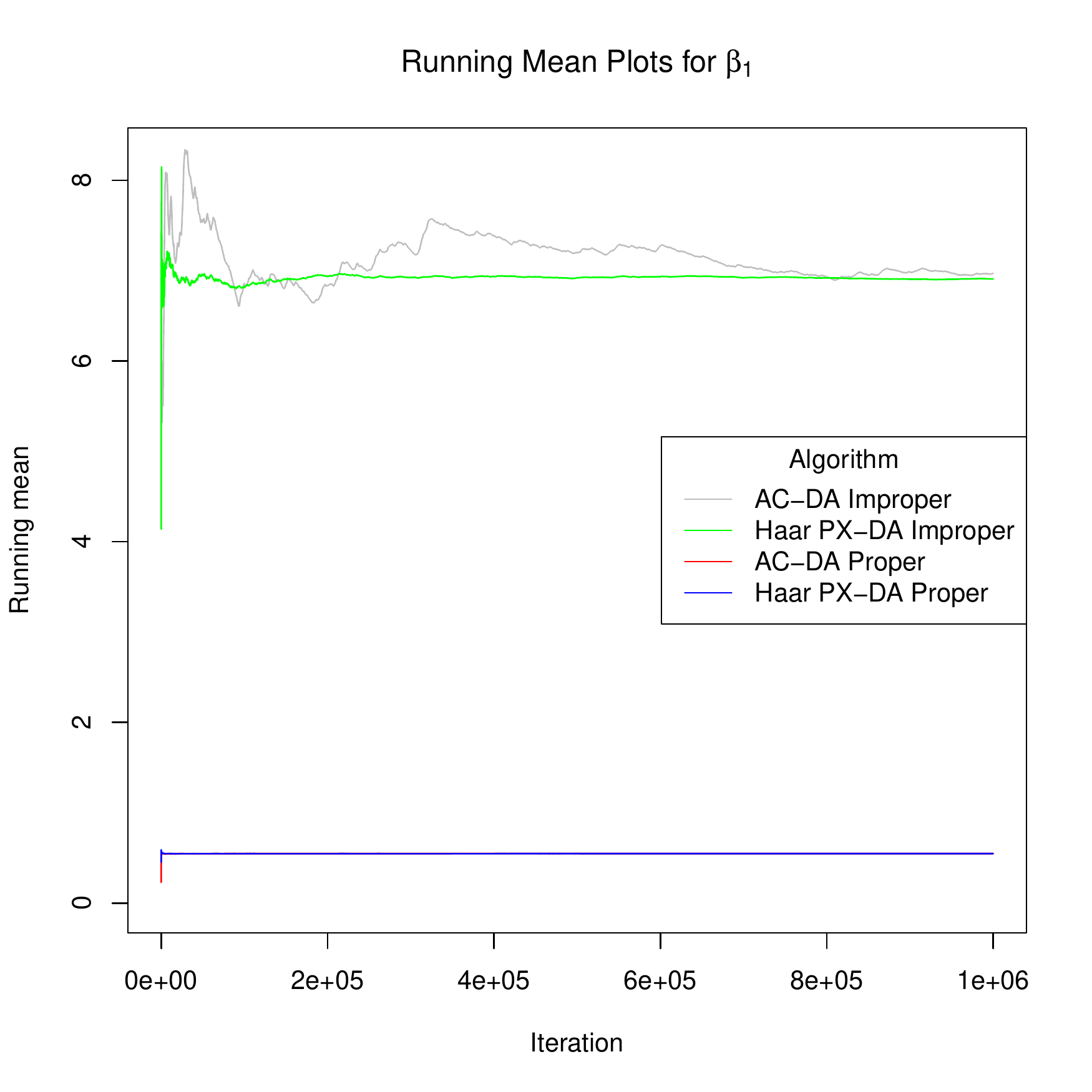}}
    \hspace*{\fill}%
	\caption{Convergence plots for the regression coefficient $\beta_1$ for AC-DA and Haar PX-DA chains, corresponding to the proper and improper priors, applied to the lupus data.}  
\end{figure}

\begin{figure}[!htpb]
	\centering
	\hspace*{\fill}%
	\subcaptionbox{Autocorrelation plots for $\beta_2$ values \label{acfplot2}}%
	[.485\linewidth]{\includegraphics[height=2.25in]{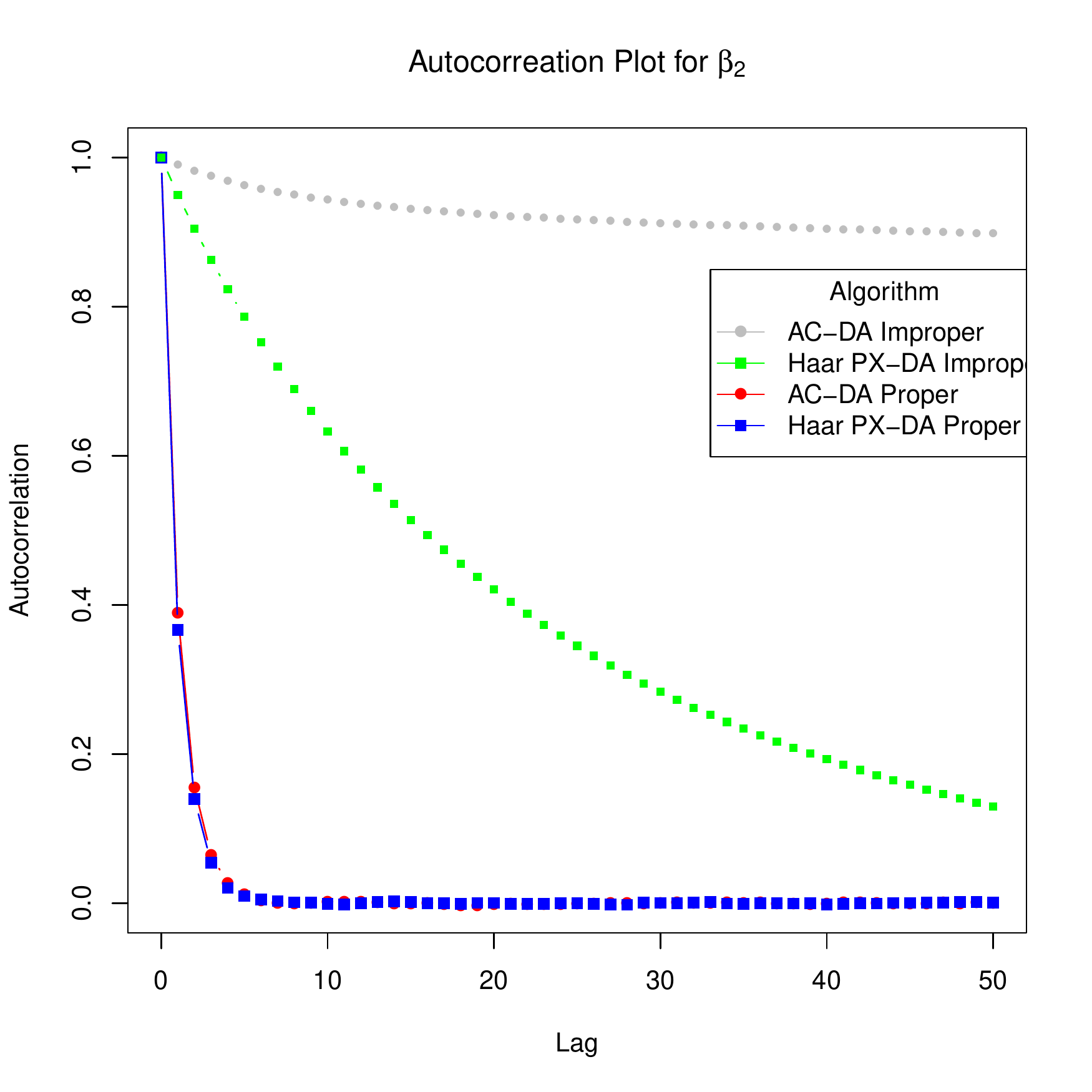}} 
	\hfill
	\subcaptionbox{Running mean plots for $\beta_2$ values \label{runmeanplot2}}
	[.485\linewidth]{\includegraphics[height=2.25in]{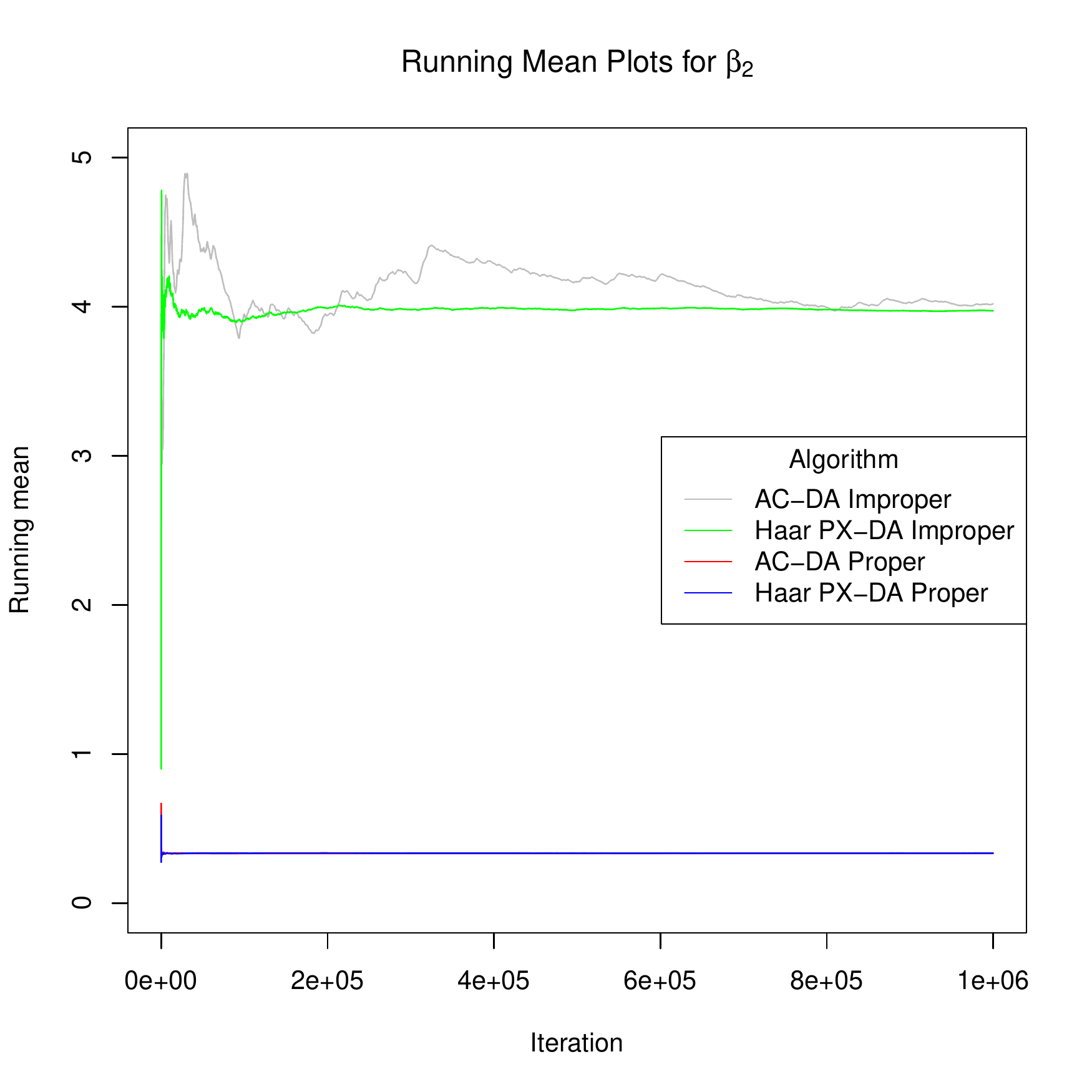}} 
	\hspace*{\fill}%
	\caption{Convergence plots for the regression coefficient $\beta_2$ for AC-DA and Haar PX-DA chains, corresponding to the proper and improper priors, applied to the lupus data.} 
\end{figure}

Because performances of the two proper prior chains are almost indistinguishable in 
the scales used in Figure~\ref{acfplot1},  \ref{runmeanplot1}, \ref{acfplot2} and 
\ref{runmeanplot2}, we take a closer look at these two chains to facilitate comparison. 
In particular, Figure~\ref{acfplot1.1} and \ref{acfplot2.1} display the autocorrelations 
and Figure~\ref{runmeanplot1.1} and \ref{runmeanplot2.1}, the running means, for 
$\beta_1$ and $\beta_2$ respectively, in appropriately chosen scales for the proper prior chains. Note that the 
autocorrelations are almost identical and the running means show very similar 
patterns in terms of stability for $\beta_2$ in Figure~\ref{acfplot2.1} and 
\ref{runmeanplot2.1} (even in the adjusted scale). On the other hand, 
Figure~\ref{acfplot1.1} and \ref{runmeanplot1.1} demonstrate a slightly more 
significant dominance of the Haar PX-DA chain over the AC-DA chain for $\beta_1$. 

\begin{figure}[!htpb]
	\centering
	\hspace*{\fill}%
	\subcaptionbox{Autocorrelation plots for $\beta_1$ values \label{acfplot1.1}}%
	[.485\linewidth]{\includegraphics[height=2.25in]{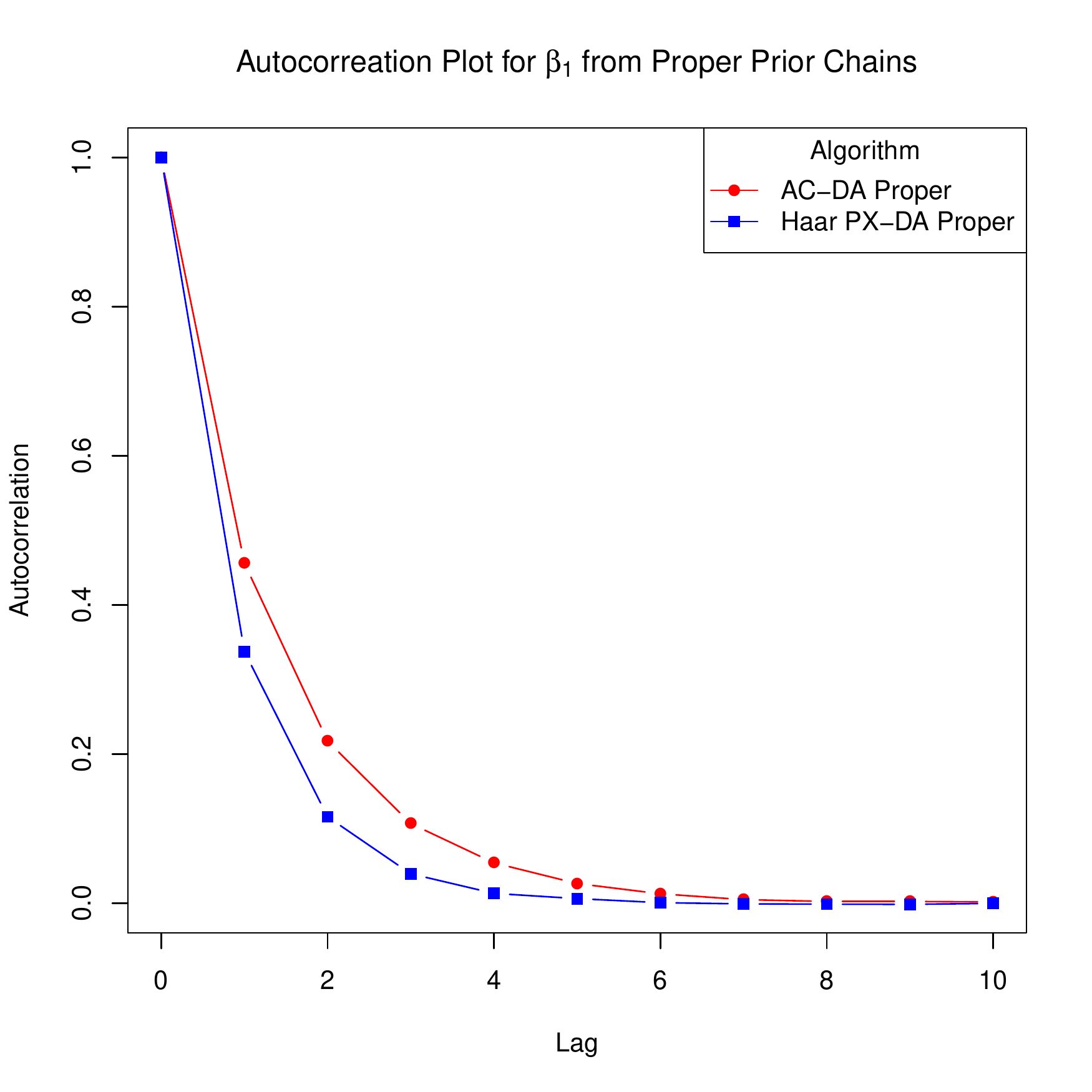}} 
	\hfill
	\subcaptionbox{Running mean plots  (on a finer scale) for $\beta_1$ values \label{runmeanplot1.1}}
	[.485\linewidth]{\includegraphics[height=2.25in]{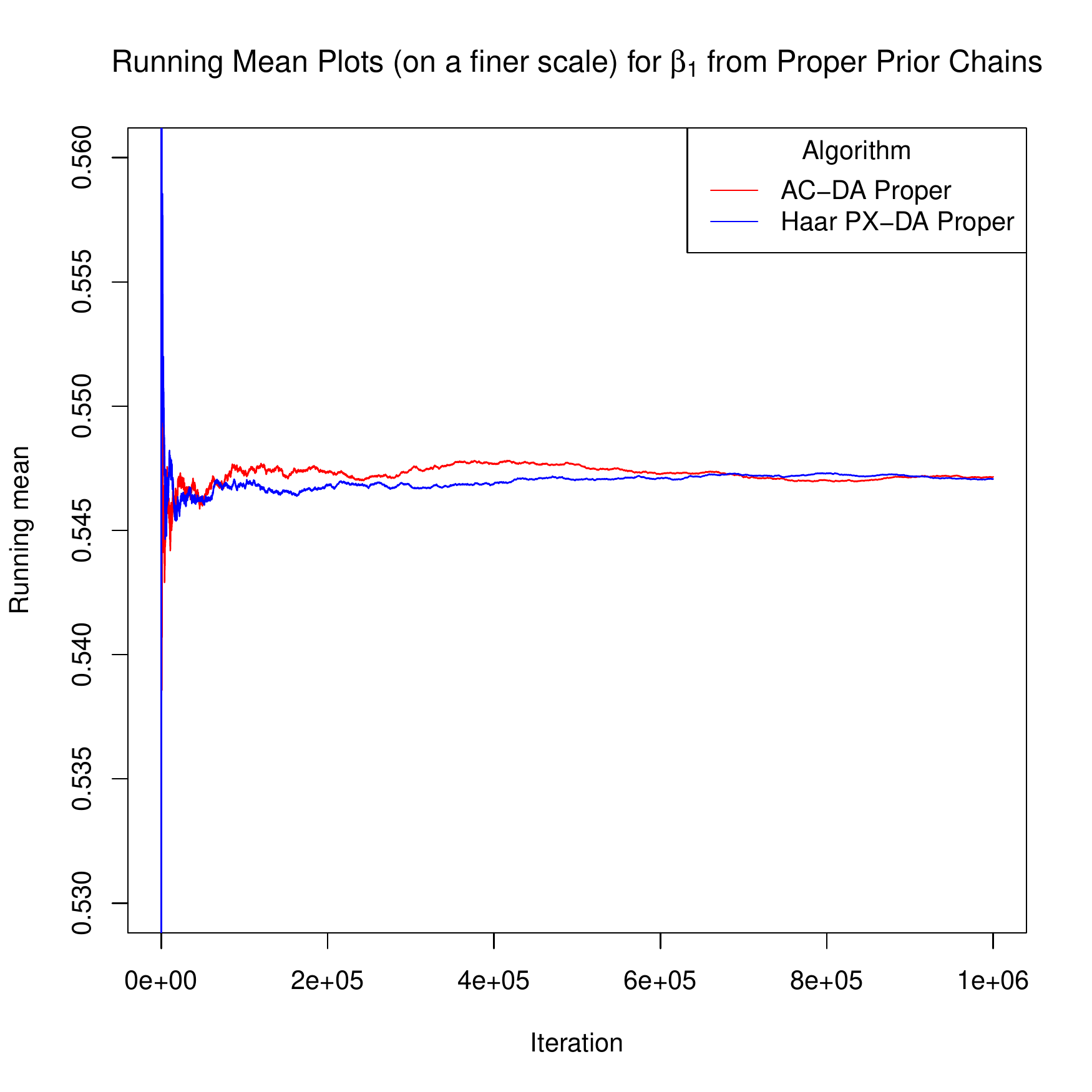}} 
	\hspace*{\fill}%
	\caption{Convergence plots (on a finer scale) for the regression coefficient $\beta_1$ for AC-DA and Haar PX-DA chains corresponding to the proper prior, applied to the lupus data.}  
\end{figure}

\begin{figure}[!htpb]
	\centering
	\hspace*{\fill}%
	\subcaptionbox{Autocorrelation plots for $\beta_2$ values \label{acfplot2.1}}%
	[.485\linewidth]{\includegraphics[height=2.25in]{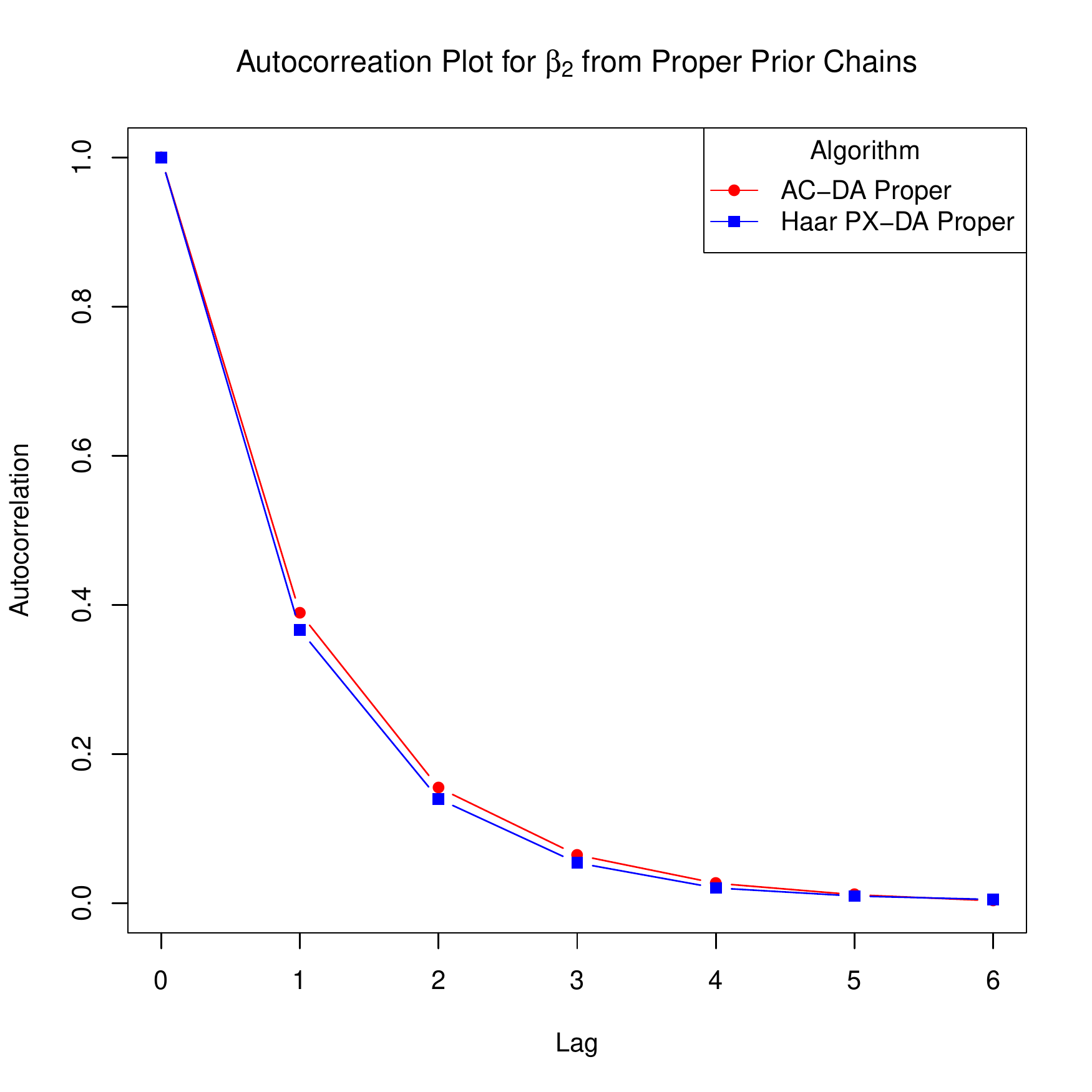}} 
	\hfill
	\subcaptionbox{Running mean plots (on a finer scale) for $\beta_2$ values \label{runmeanplot2.1}}
	[.485\linewidth]{\includegraphics[height=2.25in]{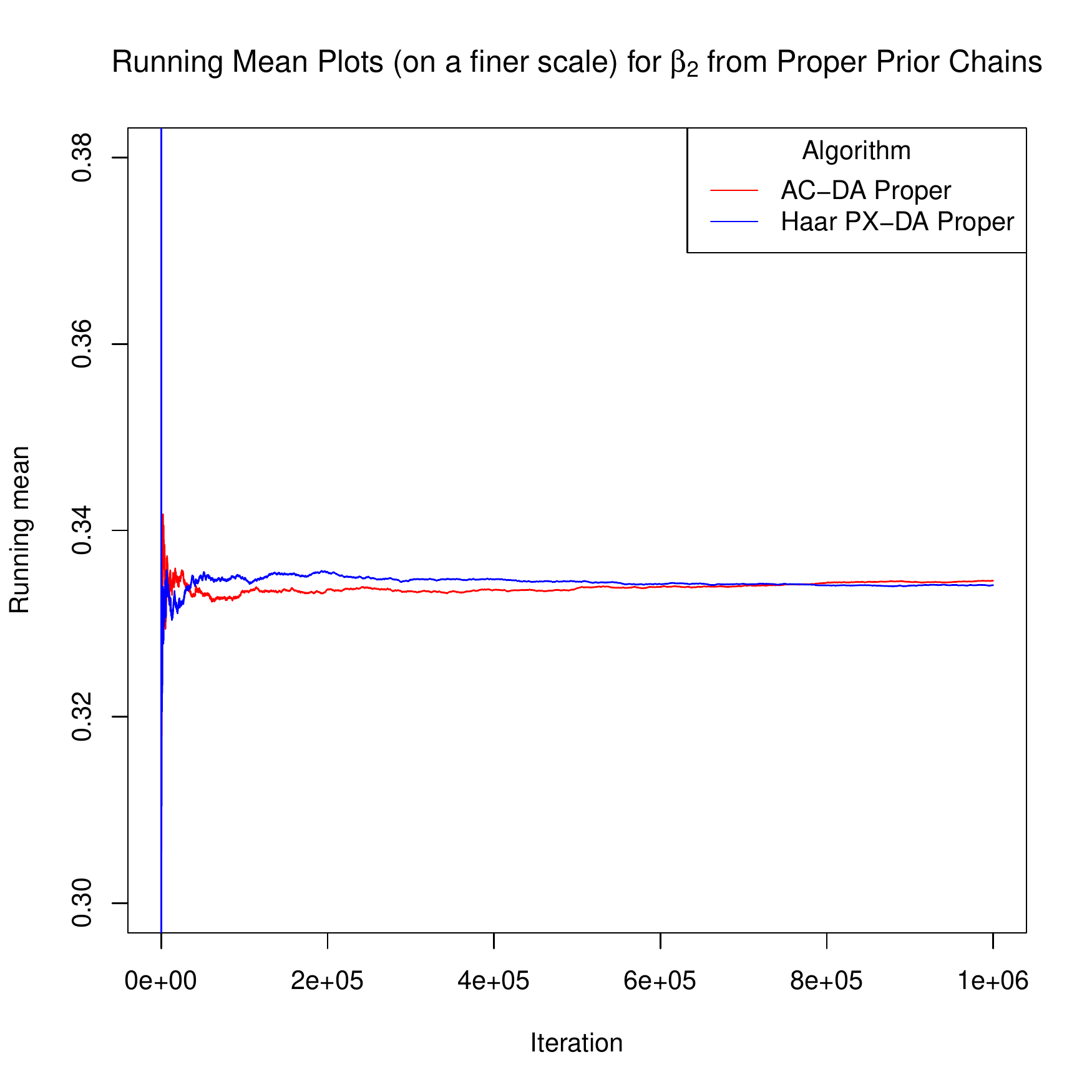}} 
	\hspace*{\fill}%
	\caption{Convergence plots (on a finer scale) for the regression coefficient $\beta_2$ for AC-DA and Haar PX-DA chains corresponding to the proper prior, applied to the lupus data.}  
\end{figure}

Thus, to summarize, it can be concluded that  in terms of convergence, the proper Haar PX-DA chain is the best among the four. Taking into account the practically insignificant amount of time needed to run the extra step, the Haar PX-DA algorithm with proper $g$-prior ($g = 3.499999$) is therefore undoubtedly the best choice among the all four algorithms considered in the current setting (the AC-DA algorithm based on the same prior being  a close competitor). 

\begin{appendix}

\section{Technical Results} \label{appB}
\begin{prop}
\label{prop1}
For any matrix $B \in \R^{n \times p}, B \neq 0_{n \times p}$, and any positive real number $\tau$, all eigenvalues of  $B\: ( B^TB + \tau I_p)^{-1} \: B^T$ lie within $[0,1)$, with at least one eigenvalue strictly positive.
\end{prop}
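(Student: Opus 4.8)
The plan is to diagonalize everything at once via the singular value decomposition of $B$. Since $B \neq 0_{n \times p}$, its rank $r := \operatorname{rank}(B)$ satisfies $r \geq 1$; write a thin SVD $B = U D V^T$, where $U \in \R^{n \times r}$ and $V \in \R^{p \times r}$ have orthonormal columns and $D = \diag(d_1, \dots, d_r)$ with $d_1, \dots, d_r > 0$. To handle the (possibly rank-deficient) inverse cleanly, I would complete $V$ to a full orthogonal matrix $V^* = (V \mid V_\perp) \in \R^{p \times p}$, so that $B^TB + \tau I_p = V^* \diag(d_1^2 + \tau, \dots, d_r^2 + \tau, \tau, \dots, \tau) (V^*)^T$ and hence $(B^TB + \tau I_p)^{-1} = V^* \diag\big((d_1^2 + \tau)^{-1}, \dots, (d_r^2 + \tau)^{-1}, \tau^{-1}, \dots, \tau^{-1}\big)(V^*)^T$.

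Substituting this into $B(B^TB + \tau I_p)^{-1}B^T$ and using $V^T V = I_r$, $V^T V_\perp = 0$, the middle factor collapses to $\diag\big((d_i^2 + \tau)^{-1}\big)_{i=1}^r$, giving
\[
B (B^TB + \tau I_p)^{-1} B^T = U \, \diag\!\left( \frac{d_1^2}{d_1^2 + \tau}, \dots, \frac{d_r^2}{d_r^2 + \tau} \right) U^T .
\]
Because $U$ has orthonormal columns, completing $U$ to an orthogonal $n \times n$ matrix shows that the eigenvalues of this matrix are exactly $d_1^2/(d_1^2 + \tau), \dots, d_r^2/(d_r^2 + \tau)$ together with $n - r$ copies of $0$. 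Each ratio lies in $(0,1)$ since $d_i > 0$ and $\tau > 0$ force $0 < d_i^2 < d_i^2 + \tau$; combined with the zero eigenvalues, all eigenvalues lie in $[0,1)$. Finally, since $r \geq 1$, the eigenvalue $d_1^2/(d_1^2 + \tau) > 0$ witnesses strict positivity of at least one eigenvalue, completing the proof.

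An alternative route, which I would mention as a remark, avoids the SVD: the nonzero eigenvalues of $B(B^TB + \tau I_p)^{-1}B^T$ coincide with those of $(B^TB + \tau I_p)^{-1}B^TB = I_p - \tau(B^TB + \tau I_p)^{-1}$, whose eigenvalues are $\mu_j/(\mu_j + \tau)$ for the (nonnegative) eigenvalues $\mu_j$ of $B^TB$; the remaining spectrum is pinned at $0$ because $v \mapsto (B^T v)^T (B^TB + \tau I_p)^{-1}(B^T v) \geq 0$ makes the matrix positive semidefinite. I do not anticipate a genuine obstacle here: the argument is a routine SVD (or $AB$ versus $BA$) computation, and the only point demanding a little care is the rank-deficient case $r < \min(n,p)$, which is precisely why I pad $V$ to a full orthogonal matrix rather than attempt to invert a rank-deficient object.
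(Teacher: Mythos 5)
Your proof is correct and follows essentially the same route as the paper: a singular value decomposition of $B$ that diagonalizes $B(B^TB+\tau I_p)^{-1}B^T$ and exhibits its eigenvalues as $d_i^2/(d_i^2+\tau)$ together with zeros. The only difference is organizational: by using the rank-$r$ thin SVD with a padded orthogonal completion you treat the cases $n\geq p$ and $n<p$ (and any rank deficiency) in one stroke, whereas the paper runs the same computation twice in separate cases.
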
 

\begin{proof}
We shall consider the cases $n \geq p$ and $n < p$ separately.
\subsection*{Case I : $n \geq p$}
Consider the following singular value decomposition:
\begin{align}
\underset{n \times p}{B} = \underset{ n \times p}{ U} \; \underset{ p \times p}{ D} \; {\underset{ p \times p}{ V}}^T
\end{align}
where $V \in \R^{p \times p}$ is orthogonal, $D \in \R^{p \times p}$ is diagonal, say $D = \diag(d_1, \cdots, d_p)$ where one or more (but not all) $d_i$'s may be equal to zero, $U \in \R^{n \times p}$ is a matrix with orthogonal columns and let 
\[
\underset{n \times n}{U^*} = \left( \underset{n \times p}{U} \left| \underset{n \times (n - p)}{\Ut} \right. \right) \
\]
be orthogonal in $\R^{n \times n}$. Then
\[
B^T B = V D^2 V^T \implies B^T B + \tau I_p = V D^2 V^T + \tau V V^T = V(D^2 + \tau I_p) V^T 
\]
So that, 
\[
( B^T B + \tau I_p )^{-1} = V(D^2 + \tau I_p)^{-1} V^T = V \left(\frac{I_p}{D^2 + \tau I_p}\right) V^T
\]
where, for diagonal matrices 
\[
M_{k \times k} = \diag(m_1, \cdots, m_k) \text{ and } N_{k \times k} =\diag(n_1, \cdots, n_k)
\] 
with $n_i \neq 0$ for all $i = 1,\cdots,k$, we define
\[
\frac{M}{N} := \diag\left( \frac{m_1}{n_1}, \cdots, \frac{m_k}{n_k} \right).
\]

\noindent Therefore,
\begin{align*}
B\: ( B^TB + \tau I_p)^{-1} \: B^T &= U D V^T V \left(\frac{I_p}{D^2 + \tau I_p}\right) V^T V D U^T \\
	&= U \left(\frac{D^2}{D^2 + \tau I_p}\right) U^T \\
	&= U \left(\frac{D^2}{D^2 + \tau I_p}\right) U^T + 0\: \Ut  \Ut^T
\end{align*}
which shows that the eigenvalues of $B\: ( B^TB + \tau I_p)^{-1} \: B^T$ are:
\[
\begin{cases}
0, & \text{with multiplicity} (n-p)\\
\frac{d_i^2}{\tau + d_i^2}, & i = 1, \cdots, p
\end{cases}.
\]

\subsection*{Case II: $n < p$}
For this case, consider following singular value decomposition:
\begin{align}
\underset{n \times p}{B^T} = \underset{ p \times n}{ U} \; \underset{ n \times n}{ D} \; {\underset{ n \times n}{ V}}^T
\end{align}
where as before (but now with different dimensions) $V \in \R^{n \times n}$ is orthogonal, $D \in \R^{n \times n}$ is diagonal, say $D = \diag(d_1, \cdots, d_n)$ where one or more (but not all) $d_i$'s may be equal to zero, and $U \in \R^{p \times n}$ is a matrix with orthogonal columns and
\[
\underset{p \times p}{U^*} = \left( \underset{p \times n}{U} \left| \underset{p \times (p - n)}{\Ut} \right. \right) 
\]
is orthogonal in $\R^{p \times p}$. Then 
\begin{align*}
 B^T B &= U D^2 U^T \\ 
\implies B^T B + \tau I_p &= U D^2 U^T + \tau U U^T + \tau \Ut \Ut^T \\ 
&= U(D^2 + \tau I_p) U^T + \tau \Ut \Ut^T 
\end{align*}
so that
\[
(B^T B + \tau I_p)^{-1} = U \left( \frac{I_p}{D^2 + \tau I_p} \right) U^T + \frac{1}{\tau} \Ut \Ut^T
\]
and therefore
\begin{align*}
B (B^T B + \tau I_p)^{-1} B^T &= V D U^T U \left( \frac{I_p}{D^2 + \tau I_p} \right) U^T U D V^T \\
& \qquad + \frac{1}{\tau}  V D U^T \Ut \Ut^T U^T U D V^T \\
&= V \left( \frac{D^2}{D^2 + \tau I_p} \right) V^T
\end{align*}
which means that the eigenvalues of $B\: ( B^TB + \tau I_p)^{-1} \: B^T$ are $d_i^2/(\tau + d_i^2)$, $i = 1, \cdots, n$.\\

\noindent Thus, in either case, all eigenvalues of $B\: ( B^TB + \tau I_p)^{-1} \: B^T$ lie within $[0,1)$ and at least one eigenvalue is positive as $B \neq 0_{n \times p}$. 
\end{proof}

\begin{prop} \label{prop2}
For any matrix $B \in \R^{n \times p}$ and any positive real number $\tau$,  $I_n - B\: ( B^TB + \tau I_p)^{-1} \: B^T$ is positive definite.
\end{prop}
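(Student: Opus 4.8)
The plan is to reduce this immediately to Proposition~\ref{prop1}. First I would dispose of the degenerate case: if $B = 0_{n \times p}$, then $I_n - B(B^TB + \tau I_p)^{-1}B^T = I_n$, which is trivially positive definite, so from here on one may assume $B \neq 0_{n\times p}$, which is exactly the hypothesis under which Proposition~\ref{prop1} applies.

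Next I would record that $A := B(B^TB + \tau I_p)^{-1}B^T$ is symmetric --- since $(B^TB + \tau I_p)^{-1}$ is symmetric (indeed symmetric positive definite) --- so by the spectral theorem $A$ has an orthonormal eigenbasis, and $I_n - A$ has the same eigenvectors with eigenvalues $1 - \lambda$ as $\lambda$ runs over the eigenvalues of $A$. Proposition~\ref{prop1} tells us that every eigenvalue $\lambda$ of $A$ lies in $[0,1)$, hence every eigenvalue $1-\lambda$ of $I_n - A$ lies in $(0,1]$ and in particular is strictly positive. A symmetric matrix with all eigenvalues strictly positive is positive definite, which finishes the proof.

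As an alternative (perhaps what the authors do, and one that does not even need Proposition~\ref{prop1}), I would note that the Sherman--Morrison--Woodbury identity gives the closed form $I_n - B(B^TB + \tau I_p)^{-1}B^T = \bigl(I_n + \tfrac1\tau BB^T\bigr)^{-1}$; since $BB^T$ is positive semidefinite, $I_n + \tfrac1\tau BB^T$ is positive definite, and hence so is its inverse. Either way, there is essentially no obstacle here; the only things to be careful about are that $A$ is symmetric (so that ``eigenvalues positive'' is genuinely equivalent to ``positive definite'') and that the case $B = 0$, which Proposition~\ref{prop1} explicitly excludes, is handled on its own.
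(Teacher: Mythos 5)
Your main argument is exactly the paper's proof: dispose of the trivial case $B = 0_{n\times p}$, orthogonally diagonalize the symmetric matrix $B(B^TB + \tau I_p)^{-1}B^T$, and invoke Proposition~\ref{prop1} to conclude that the eigenvalues of $I_n - B(B^TB+\tau I_p)^{-1}B^T$ lie in $(0,1]$, hence positive definiteness. Your alternative via the Sherman--Morrison--Woodbury identity, $I_n - B(B^TB+\tau I_p)^{-1}B^T = (I_n + \tfrac{1}{\tau}BB^T)^{-1}$, is also correct and is in fact shorter, but it is not the route the paper takes.
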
 

\begin{proof}
Note that the result is trivially true if $B = 0_{n \times p}$. So, without loss of generality we assume $B \neq 0_{n \times p}$. Let $\lambda_1,\cdots,\lambda_n$ denote the eigenvalues of $M := B ( B^TB + \tau I_p)^{-1} B^T$.
Then there exits an orthogonal matrix $U \in \R^{n \times n}$ such that $M = U \Lambda U^T \implies U^T M U = \Lambda$, where $\Lambda = \diag(\lambda_1,\cdots,\lambda_n)$. Note that
\[
U^T(I_n - M)U = U^TU - U^TMU = I_n - \Lambda = \diag\{1- \lambda_1,\cdots,1-\lambda_n\}
\]
which implies that the eigenvalues of $I_n - M$ are $1- \lambda_1,\cdots,1-\lambda_n$. This completes the proof since it follows from Proposition~\ref{prop1} that $\lambda_i \in [0,1) \implies 1-\lambda_i \in (0,1]$ for all $i = 1,\cdots,n$.
\end{proof}

\begin{remark}
Proposition~\ref{prop1} and Proposition~\ref{prop2} are essentially generalizations of 
the (first halves of) Lemma 4 and Lemma 5 in 
Roman and Hobert \citep{roman:hobert:2015}, where by exhibiting explicit forms for 
the eigenvalues of a matrix of the form $B\: ( \kappa B^TB + \Sigma^{-1})^{-1} \: 
B^T$, with $\kappa > 0$, $B \in \R^{n \times p}$, and $\Sigma \in \R^{p \times p}$ 
positive definite, the authors ultimately prove the positive definiteness of $I_n - 
\kappa B\: (\kappa B^TB + \Sigma^{-1})^{-1} \: B^T$. It is to be noted that these 
results in Roman and Hobert \citep{roman:hobert:2015} are derived under the 
assumption that $n \geq p$ and $\text{rank}(B) = p$, whereas 
Proposition~\ref{prop1} and Proposition~\ref{prop2} hold for \textit{any} $n$, $p$ 
and $B \in \R^{n \times p}$. 
\end{remark}

\begin{prop} \label{prop3}
Let $A, B \in \R^{m \times m}$ be symmetric matrices. If $A$ is positive definite, $B$ is positive semi definite  and $(k_n)_{n=1}^\infty$ is a sequence of positive numbers converging to zero, then for all large $n$, $A - k_n B$ is positive definite. 
\end{prop}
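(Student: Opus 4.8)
The plan is to reduce the claim to a single scalar inequality between the smallest eigenvalue of $A$ and a fixed multiple of the largest eigenvalue of $B$. First I would dispose of the trivial case $B = 0_{m \times m}$, for which $A - k_n B = A$ is positive definite for every $n$, so that assumption can be made without loss of generality that $B \neq 0_{m \times m}$. Since $A$ is symmetric positive definite, $\lambdamin(A) > 0$; since $B$ is symmetric positive semi-definite and nonzero, $\lambdamax(B) > 0$. By the Rayleigh quotient characterization, for every unit vector $\xb \in \R^m$,
\[
\xb^T (A - k_n B) \xb = \xb^T A \xb - k_n \xb^T B \xb \ge \lambdamin(A) - k_n \lambdamax(B).
\]

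Next, because $k_n \to 0$ and $\lambdamin(A)/\lambdamax(B)$ is a fixed strictly positive number, there exists $N$ such that $k_n < \lambdamin(A)/\lambdamax(B)$ for all $n \ge N$. For each such $n$ the right-hand side of the displayed inequality is strictly positive, uniformly over all unit vectors $\xb$, and hence $A - k_n B$ is positive definite. This completes the argument.

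As for obstacles, there is essentially none of substance: the statement is an elementary perturbation fact and the argument above is a three-line eigenvalue estimate. An equally short alternative route is to observe that $A - k_n B$ is positive definite if and only if $I_m - k_n A^{-1/2} B A^{-1/2}$ is, and the latter holds as soon as $k_n \lambdamax(A^{-1/2} B A^{-1/2}) < 1$, which is eventually true since $A^{-1/2} B A^{-1/2}$ is positive semi-definite with finite spectral radius. The only points that warrant a moment's care are keeping the inequality $\lambdamin(A) > 0$ strict (so that large $n$ genuinely yields a \emph{strict} lower bound) and handling $B = 0$ separately so that the division by $\lambdamax(B)$ is legitimate.
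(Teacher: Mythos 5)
Your proof is correct and follows essentially the same route as the paper: handle $B = 0$ trivially, then bound the Rayleigh quotient below by $\lambdamin(A) - k_n \lambdamax(B)$ and use $k_n \to 0$ to make this strictly positive for large $n$ (the paper uses the threshold $\tfrac{1}{2}\lambdamin(A)/\lambdamax(B)$ where you use $\lambdamin(A)/\lambdamax(B)$ directly, an immaterial difference).
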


\begin{proof}
We first note that if $B = 0_{m \times m}$, then this result is trivially true for any $n \geq 1$. So, without loss of generality we shall assume $B \neq 0_{m \times m}$. Let $\xb \in \R^m \setminus \{\bm{0}\}$. Then, for any $n \geq 1$,
\begin{align*}
\frac{\xb^T(A - k_n B)\xb}{\xb^T\xb} = \frac{\xb^TA \xb}{\xb^T\xb} - k_n \frac{\xb^T B\xb}{\xb^T\xb}
\geq \lambdamin(A) - k_n \lambdamax(B)
\end{align*}
where $\lambdamin(A)$ and $\lambdamax(B)$ respectively denote the minimum and the maximum eigenvalues of $A$ and $B$. Since $A$ is positive definite and $B \neq 0$ is positive semi definite, therefore, both $\lambdamin(A)$ and $\lambdamax(B)$ are positive. Now, $k_n \downarrow 0$ means that there exists $N$ such that 
\[
n \geq N \implies k_n < \frac{1}{2} \: \frac{\lambdamin(A)}{\lambdamax(B)}
\]

\noindent Therefore, for all $n \geq N$,
\begin{align*}
\quad \frac{\xb^T(A - k_n B)\xb}{\xb^T\xb} &\geq \lambdamin(A) - k_n \lambdamax(B) \\ 
&> \lambdamin(A) - \frac12 \lambdamin(A) = \frac12 \lambdamin(A) > 0 \\
\implies \xb^T(A - k_n B)\xb &> 0
\end{align*}
This completes the proof since $\xb \in \R^m \setminus \{\bm{0}\}$ is arbitrary. 
\end{proof}

\section{Drawing observations from a density proportional to $w^*(g)$} \label{appA}

Here we describe a method for drawing observations from the density $\tilde{w}^*(g) = w^*(g)/\int_0^\infty w^*(s)\:ds$. First, note that
\begin{align*}
w^*(g) \: dg &= g^{n-1} \exp \left[ -\frac{1}{2} \left\lbrace  A(\zb) g^2 - 2  B(\zb) g \right\rbrace \right] dg  \\
&= \frac{1}{2} u^{n/2 - 1} \exp \left[ -\frac{1}{2} \left\lbrace A(\zb) u - 2  B(\zb)\sqrt{u} \right\rbrace  \right] du \\
&\propto u^{n/2 - 1} \exp \left[ -\frac{1}{2} \left\lbrace A(\zb) u - 2  B(\zb)\sqrt{u} \right\rbrace  \right] du =:l(u) \: du
\end{align*}
where $u = g^2$. So if $u$ is an observation from the density $\tilde{l}(\cdot) = l(\cdot) /\int_0^\infty l(s)\:ds$, then  the corresponding observation from $\tilde{w}^*(\cdot)$ will simply be $g = \sqrt{u}$. Note that when $B(\zb) = 0$, $\tilde{l}$ reduces to the Gamma$\left(\frac{n}{2},\frac{2}{A(\zb)}\right)$ density (see Remark \ref{B=0}), from which drawing observations is effortless. When $B(\zb) \neq 0$,  one can use  rejection sampling techniques, where $l$ needs to be majorized by a constant (depending on $\zb$) multiple of some standard density. Such a majorizing function can be easily found in this setting. For example, observe that for any $\epsilon \in (0,1)$, 
\begin{align*}
l(u) &= u^{n/2 - 1} \exp \left[ -\frac{A(\zb)}{2} u \right]   \exp\left[ 2 \times \frac{\sqrt{\epsilon A(\zb)  u}}{\sqrt{2}} \times \frac{B(\zb)}{ \sqrt{2 \epsilon A(\zb)}} \right] \\
&\leq u^{n/2 - 1} \exp \left[ -\frac{A(\zb)}{2} u \right] \exp \left[ \frac{ \epsilon A (\zb)}{2} u \right] \exp\left[\frac{B(\zb)^2}{2 \epsilon A(\zb)} \right] \\
&= \exp \left[\frac{B(\zb)^2}{2 \epsilon A(\zb)} \right] u^{n/2 - 1} \exp \left[ -\frac{(1-\epsilon)A(\zb)}{2} u \right]\\
&= M(\zb) \: f_{\text{Gamma}\left(\frac{n}{2},\frac{2}{(1-\epsilon) A(\zb)}\right)}(u)
\end{align*}
where
\begin{align} \label{Mz}
M(\zb) =  \exp \left[\frac{B(\zb)^2}{2 \epsilon A(\zb)} \right] \Gamma(n/2) \left(\frac{ 2}{ (1-\epsilon) A(\zb)}\right)^{n/2}
\end{align}
and $f_{\text{Gamma}\left(\alpha, \beta\right)}$ denotes the density function of the gamma distribution with location $\alpha$ and scale $\beta$, and the inequality in the second line follows from the fact that $2ab \leq a^2 + b^2$, with $a = {\sqrt{\epsilon A(\zb)  u/2}}$ and $b={B(\zb)}/{ \sqrt{2 \epsilon A(\zb)}}$.\\

\noindent
Thus, to summarize, one procedure involving rejection sampling to generate an 
observation from $\tilde{w}^*(g)$, is as follows.
\begin{enumerate}
\item  If $B(\zb) = 0$, generate $u \sim {\text{Gamma}\left(\frac{n}{2},\frac{2}{A(\zb)}\right)}$. If $B(\zb) \neq 0$, fix $\epsilon \in (0,1)$ and
\begin{enumerate}[label = (1\alph*)]
\item \label{rejsamp1} generate $u \sim {\text{Gamma}\left(\frac{n}{2},\frac{2}{(1-\epsilon) A(\zb)}\right)}$ and calculate 
\[
\rho(u) = \frac{l(u)}{M(\zb) \: f_{\text{Gamma}\left(\frac{n}{2},\frac{2}{(1-\epsilon) A(\zb)}\right)}(u)}.
\] 
where $M(\zb)$ is as given in (\ref{Mz}).
\item \label{rejsamp2} perform a Bernoulli experiment with probability of success $\rho(u)$. If a success is achieved, retain $u$. Else return to \ref{rejsamp1}.
\end{enumerate}
\item Compute $g = \sqrt{u}$.
\end{enumerate}

\end{appendix}

\bibliographystyle{apa}
\bibliography{Reference}

\end{document}